\documentclass[reqno, 10pt, a4paper]{amsart}%

\usepackage{graphicx}
\usepackage{multicol,multirow}
\usepackage{amsmath,amssymb,amsfonts}
\usepackage{mathrsfs}
\usepackage{rotating}
\usepackage{appendix}
\usepackage[numbers]{natbib}

\numberwithin{equation}{section}



\usepackage[utf8]{inputenc}
\usepackage[OT2,T1]{fontenc}
\usepackage[english]{babel}

\usepackage{amsmath}
\usepackage{amssymb}
\usepackage{amsthm}

\usepackage{fullpage}
\linespread{1.3}\selectfont

\usepackage[mathcal]{euscript}

\usepackage[dvipsnames]{xcolor}
\usepackage{graphicx}
\usepackage{pgfplots}
\pgfplotsset{compat=1.17}
\usepgfplotslibrary{groupplots}

\usepackage{booktabs}

\usepackage[backref=page, hyperindex=true, colorlinks=true,
  breaklinks=true, urlcolor=blue, linkcolor=blue, citecolor=blue,
  bookmarks=true, bookmarksopen=true]{hyperref}

\usepackage{colonequals}

\theoremstyle{plain}
\newtheorem{theorem}{Theorem}[section]
\newtheorem{proposition}[theorem]{Proposition}
\newtheorem{lemma}[theorem]{Lemma}
\newtheorem{corollary}[theorem]{Corollary}
\newtheorem{definition}[theorem]{Definition}
\newtheorem{conjecture}[theorem]{Conjecture}

\theoremstyle{remark}
\newtheorem{remark}[theorem]{Remark}
\newtheorem{example}[theorem]{Example}

\numberwithin{equation}{section}



\newcommand\eps{\varepsilon}


\DeclareMathOperator{\Aut}{Aut}
\DeclareMathOperator{\Fr}{Fr}
\DeclareMathOperator{\Fd}{\mathfrak{f}}
\DeclareMathOperator{\GSp}{GSp}
\DeclareMathOperator{\Hd}{\mathfrak{h}}
\DeclareMathOperator{\PGL}{PGL}
\DeclareMathOperator{\RedAut}{RedAut}
\DeclareMathOperator{\sgn}{sgn}
\DeclareMathOperator{\Tr}{Tr}
\DeclareMathOperator{\USp}{USp}
\DeclareMathOperator{\Twist}{Twist}


\newcommand\cprime{\/{\mathsurround=0pt$'$}}

\newcommand\Fbar{\overline{\F}}
\newcommand\Xbar{\overline{\X}}
\newcommand\Qbar{\overline{\Q}}

\newcommand\CC{\mathbb{C}}
\newcommand\F{\mathbb{F}}
\newcommand\NN{\mathbb{N}}
\newcommand\Q{\mathbb{Q}}
\newcommand\R{\mathbb{R}}
\newcommand\VV{\mathbb{V}}
\newcommand\Z{\mathbb{Z}}

\newcommand\bS{\mathbf{S}}
\newcommand\bs{\mathbf{s}}

\newcommand\calH{\mathcal{H}}
\newcommand\I{\mathcal{I}}
\newcommand\M{\mathcal{M}}
\newcommand\N{\mathcal{N}}
\newcommand\X{\mathcal{X}}
\newcommand\calV{\mathcal{V}}
\newcommand\Y{\mathcal{Y}}

\newcommand\fraka{\mathfrak{a}}
\newcommand\bb{\mathfrak{b}}
\newcommand\cc{\mathfrak{c}}

\newcommand{\Mnh}{\mathcal{M}^{\nhyp}}
\newcommand{\Nq}{\mathcal{N}}

\newcommand{\even}{\textup{even}}
\newcommand{\odd}{\textup{odd}}
\newcommand{\hyp}{\textup{hyp}}
\newcommand{\nhyp}{\textup{nhyp}}


\hypersetup{
  pdfauthor   = {Bergstr\"om, Howe, Lorenzo Garc\'ia, Ritzenthaler},
  pdftitle    = {Refinements of Katz-Sarnak theory for the number of points on curves over finite fields},
  pdfsubject  = {},
  pdfkeywords = {},
}


\begin{document}
\title[Refinements of Katz--Sarnak theory for the number of points on curves over finite fields]{Refinements of Katz--Sarnak theory for the number of points on curves over finite fields}

\date{14 December 2023}

\author[Bergstr\"om]{Jonas Bergstr\"om}
\address{%
Jonas Bergstr\"om, 
Matematiska institutionen, Stockholms Universitet, SE-106 91, Stockholm, Sweden
}
\email{jonasb@math.su.se}

\author[Howe]{Everett W. Howe}
\address{%
Everett W. Howe,
Independent mathematician,
San Diego, CA 92104 U.S.A.
}
\email{however@alumni.caltech.edu}
\urladdr{\href{https://ewhowe.com}{https://ewhowe.com}}

\author[Lorenzo]{Elisa Lorenzo García}
\address{%
	Elisa Lorenzo Garc\'ia,
  Universit\'e de Neuch\^atel, rue Emile-Argand 11, 2000, Neuch\^atel,
  Switzerland. 
}

\address{%
	Elisa Lorenzo Garc\'ia,
   Univ Rennes, CNRS, IRMAR - UMR 6625, F-35000
 Rennes, %
  France. %
}
\email{elisa.lorenzo@unine.ch, elisa.lorenzogarcia@univ-rennes1.fr}

\author[Ritzenthaler]{Christophe Ritzenthaler}
\address{%
	Christophe Ritzenthaler,
  Univ Rennes, CNRS, IRMAR - UMR 6625, F-35000
 Rennes, %
  France. %
  }
  
\address{%
	Christophe Ritzenthaler,
  Université Côte d'Azur, CNRS, LJAD UMR 7351,
  Nice,
  France
}
\email{christophe.ritzenthaler@univ-rennes1.fr}


\subjclass[2010]{11G20, 11R45, 14H10, 14H25}

\keywords{Katz--Sarnak theory; distribution; moments; Serre's obstruction}

\begin{abstract}
This paper goes beyond Katz--Sarnak theory on the distribution of curves over finite fields according to their number of rational points, theoretically, experimentally and conjecturally. In particular, we give a formula for the limits of the moments measuring the asymmetry of this distribution for (non-hyperelliptic) curves of genus $g\geq 3$. The experiments point to a stronger notion of convergence than the one provided by the Katz--Sarnak framework for all curves of genus $\geq 3$. However, for elliptic curves and for hyperelliptic curves of every genus we prove that this stronger convergence cannot occur.   
 \end{abstract}

\maketitle

\section{Introduction}
\label{sec:intro}

Katz--Sarnak theory \cite{katz-sarnak} gives a striking unified framework to understand the distribution of the traces of Frobenius for a family of curves\footnote{ 
   Throughout this paper, the word `curve' will always mean a
   projective, absolutely irreducible, smooth variety of dimension~$1$.}
of genus $g$ over a finite field $\F_q$ when $q$ goes to infinity. It has been used in many specific cases: see
\cite{AS10}, \cite{BCDGL}, \cite{CDSS}, \cite{FKRS}, \cite{HKLLM}, \cite{KS09}, \cite{vladut},
among others. 
Although powerful, this theory can neither in general predict the number of curves 
over a given finite field with a given trace, nor distinguish between the family of all
curves and the family of hyperelliptic curves when $g\geq 3$. This paper can be seen as
an attempt to go beyond Katz--Sarnak results, theoretically, experimentally and 
conjecturally. We hope that this blend will excite the curiosity of the community.

We begin by resuming our study of sums of powers of traces initiated in  \cite{BHLR22}.
If $C/\F_q$ is a curve of genus $g$, we denote by $[C]$ the set of representatives of its twists and define 
\[
s_n(C) = \sum_{C' \in [C]} \frac{(q+1-\#C'(\F_q))^n}{\# \Aut_{\F_q}(C')}.
\]
As shown in \cite[Prop.~3.1]{BHLR22}, the $s_n(C)$ are integers and 
we denote by $S_{n}(q,\X)$ the sum of the $s_n(C)$ when $C$ runs over a set of representatives for the $\Fbar_q$-isomorphism classes of curves $C$ over $\F_q$ in $\X$, where $\X$ can be, for example,
\begin{itemize}
\item the moduli space $\M_{1,1}$ of elliptic curves, 
\item the moduli space $\M_g$ of curves of genus $g >1$, 
\item the moduli space $\calH_g$ of hyperelliptic curves of genus $g>1$, or 
\item the moduli space $\Mnh_g$ of non-hyperelliptic curves of genus $g>2$.
\end{itemize}
In Remark~\ref{remark-elliptic} we will briefly recall that $S_n(q,\M_{1,1})$ can be determined for all $q$ and $n$ in terms of traces of Hecke operators on spaces of elliptic modular cusp forms.
For every $q$ and $n$ we can also find expressions for $S_n(q,\M_2)=S_n(q,\calH_2)$ in terms of traces of Hecke operators acting on spaces of Siegel modular cusp forms of genus $2$ (and genus $1$) starting from \cite[Thm.~2.1]{petersen}; see \cite[\S4.5]{BF22} for a few more details. 
For every $g \geq 3$, there are known explicit formulae for $S_n(q,\X)$ only for the first values of $n$; see for instance \cite[Thm.~3.4]{BHLR22} for $\calH_g$ (note that the odd $n$ values are equal to $0$ in this case) and \cite{berg} for $\Mnh_3$. However, it is possible to give an interpretation for
\[
\fraka_n(\X)\colonequals
\lim_{q \to \infty} \frac{S_n(q,\X)}{q^{\dim \X+n/2}}
\]
with $\X=\M_g$, $\calH_g$ or $\Mnh_g$ for every $g \geq 2$ and even $n \geq 2$ in terms of representation theory of the compact symplectic group $\USp_{2g}$. This is achieved in \cite[Thm.~3.8]{BHLR22} 
using the ideas of Katz and Sarnak. 

Our first contributions are gathered in Theorem~\ref{theorem:H0H1}. Using the results of Johnson \cite{johnson} and Hain \cite{hain}, together with results of Petersen \cite{petersen,petersen3}  about the first cohomology group of symplectic local systems on $\M_g$, we can prove that for even values of $n>0$ we have
\begin{equation} \label{eq:conv-an}
\fraka_n(\M_g) - \frac{S_n(q,\M_g)}{q^{\dim \M_g+n/2}} = O(q^{-1})
\end{equation}
when $g\geq 2$,
whereas Katz--Sarnak would only give $O(q^{-1/2})$. Since $\fraka_n(\M_g)=0$ for odd values of $n$, this suggests replacing the exponent in the power of $q$ in the denominator of the expression defining $\fraka_n(\M_g)$ with a smaller number. As far as we know this has not been considered previously. We therefore introduce for odd $n$
\[
\bb_n(\M_g)\colonequals
-\lim_{q \to \infty} \frac{S_n(q, \M_g)}{q^{3g-3+(n-1)/2}}.
\]
Theorem~\ref{theorem:H0H1} gives $\bb_n(\M_g)$  in terms of an explicit integral and in terms of the representation theory of $\USp_{2g}$. This second description  makes it easy to compute. The idea to use information about the cohomology of moduli space of curves to predict the number of curves over a given finite field with a given trace can also be found in \cite{AEKWZB}, but there $g$ goes to infinity. 
\\

The deep relations between the sum of traces and Katz--Sarnak theory becomes clearer once we switch to a probabilistic point of view. In Section~\ref{sec:cvmoments}, we introduce the classical probability measure $\mu_{q,g}$ on the interval $[-2g,2g]$ derived from the numbers of $\F_q$-isomorphism classes of curves of genus $g>1$ with given traces of Frobenius. From Katz--Sarnak, we then know that the sequence of measures $(\mu_{q,g})$ weakly converges to a continuous measure $\mu_g$ with an explicit density $\Fd_g$ (see \cite[Thm.~2.1]{billingsley} for equivalent definitions of weak convergence of measures). In this language, the numbers $\fraka_n(\M_g)$ can be understood as the $n$th moments of the measure $\mu_g$, and 
 we can refine Katz--Sarnak theory using a second continuous function $\Hd_g$ whose $n$th moments are the numbers $\bb_n(\M_g)$ (see Theorem~\ref{th:betterKS}).\\

In Section~\ref{sec:experiment}, we
investigate whether the Katz--Sarnak limiting distributions can be
used to approximate the number of curves over a given finite field
$\F_q$ of a given genus and with a given trace of Frobenius; 
one might hope that integrating that 
distribution over an interval of
length $1/\sqrt{q}$ around $t/\sqrt{q}$  would give a value close
to the number of genus-$g$ curves over $\F_q$ having trace~$t$.
We show that this does \emph{not} happen for elliptic curves or for
hyperelliptic curves of any genus. For elliptic curves,
Proposition~\ref{P:nolimit1} shows that the number of elliptic curves
with a given trace can be an arbitrarily large multiple of this
na\"{\i}ve Katz--Sarnak prediction (see also Figure~\ref{fig:genus1}). For hyperelliptic curves,
Proposition~\ref{P:nolimit} shows (roughly speaking)
that if the number of curves is
asymptotically bounded above and below by two multiples of the
na\"{\i}ve Katz--Sarnak prediction, then the ratio of these
two multiples is bounded below by a fixed number strictly
greater than~$1$ (see Figure~\ref{fig:genus2}).

On the other hand, numerical experiments suggest that the elliptic and hyperelliptic cases differ in the sense that it is easy to `correct' the distribution in the hyperelliptic cases to observe a good approximation by the density function $\Fd_g$ (see Figure~\ref{fig:genus2scaled}). Even stronger, computations for all non-hyperelliptic curves of genus $3$ (see Figure~\ref{fig:genus3}) make us dream that
the na\"{\i}ve Katz--Sarnak approximation \emph{does} directly give an accurate estimate for the number of curves with a given number of points. This leads us to claim the bold Conjecture~\ref{conj:ponctual-conv}. The heuristic idea behind this conjecture is that for each trace, one is averaging over many isogeny classes which somehow would allow this stronger convergence as long as there are no obvious arithmetic obstructions. Our attempts to use the better convergence rates of the moments in the case of $\M_g$ for $g \geq 3$ to prove this conjecture were unfortunately unsuccessful. {However, for $g=1$, we would like to point out the shortening of the intervals of convergence obtained in \cite{ma2023refinements}, which may give some hints for addressing the question.}\\

Finally, in Section~\ref{sec:asymmetry} we  
revisit the work of \cite{RRRSS} on the symmetry breaking for the trace distribution of (non-hyperelliptic) genus $3$ curves, by looking at the difference between the number of curves with trace $t$ and the number of curves with trace $-t$. In probabilistic terms, this asymmetry is given by a signed measure $\nu_{q,g}$. Although this signed measure weakly converges to $0$ when $q$ goes to infinity, by Corollary~\ref{cor:momentsnu} the moments of $\sqrt{q} \,\nu_{q,g}$ converge to $-2 \bb_n(\M_g)$ when $n$ is odd (and are trivially $0$ when $n$ is even).  In particular, this shows that by `zooming in' on the Katz--Sarnark distribution, one can spot a difference between the behaviour for hyperelliptic curves (for which the corresponding signed measures would all be $0$) and for non-hyperelliptic curves.

In the same spirit as Section~\ref{sec:experiment}, the experimental data for $g=3$ (see Figure~\ref{fig:comp}) and the convergence of moments lead us to conjecture that the sequence of signed measures $(\sqrt{q} \,\nu_{q,g})$ weakly converges to the continuous signed measure with density $-2 \Hd_g$ for all $g \geq 3$. Notice that in contrast to the case of positive bounded measures, the convergence of moments of signed measures on a compact interval does not directly imply weak convergence; see Example~\ref{ex:moments-signed}.

With such a conjecture in hand, one may then improve on the result of \cite{RRRSS} that 
heuristically approximated the limit density of $(\sqrt{q} \,\nu_{q,g})$ by the function $$x (1-x^2/3) \cdot \left(\frac{1}{\sqrt{2\pi}} e^{-x^2 / 2}\right).$$ Using the first values of $\bb_n(\M_3)$, we get the better approximation
\[
x \left(5/4- x^2/2+ x^4/60\right) \left(\frac{1}{\sqrt{2 \pi}} e^{-x^2/2}\right).
\]

\subsection*{Acknowledgement} We thank Dan Petersen for helpful conversations in connection with the Gross--Schoen cycle and Sophie Dabo for discussions on measure theory.


\section{Limits of sums of powers of traces} \label{sec:moments}
Fix a prime power $q$. 
Let us start by recalling some definitions and results from  \cite{BHLR22}.
\begin{definition}  \text{ }
Let $\X=\calH_g$, $\M_g$ or $\Mnh_g$ for any $g \geq 2$, or $\X=\M_{1,1}$. 
\begin{itemize}
\item[$\star$] Recall from Section~\textup{\ref{sec:intro}} that one defines 
\[
S_n(q,\X)=\sum_{[C] \in \X(\F_q)} \sum_{C' \in [C]} \frac{(q+1-\#C'(\F_q))^n}{\# \Aut_{\F_q}(C')}
\]
where $[C]$ is a point of $\X(\F_q)$ representing the $\Fbar_q$-isomorphism class of a curve $C/\F_q$, and the second sum spans the set of representatives of all twists $C'$ of $C$.
\item[$\star$] For every $n \geq 1$, let 
\[ 
\fraka_n(\X)\colonequals
\lim_{q \to \infty} \frac{S_n(q,\X)}{q^{\dim \X+n/2}}
\]
with $\X=\calH_g$ or $\M_g$ or $\Mnh_g$ for any $g \geq 2$, or with $\X=\M_{1,1}$. 
\end{itemize}
\end{definition}
 Define $w_k\colonequals\sum_{j=1}^g  2\cos k\theta_j$ and
\[
dm_g\colonequals\frac{1}{g!\,\pi^g}\prod_{i<j}(2\cos\theta_i-2\cos\theta_j)^2\prod_i 2\sin^2\theta_i\  d\theta_1\ldots d\theta_g,
\] and recall from \cite[Thm.~2.1]{BHLR22}
that for every $g \geq 2$ and $n \geq 1$, 
\[
\fraka_n(\X)= \int_{{(\theta_1,\ldots,\theta_g)}\in [0,\pi]^g } w_1^n  \, d m_g,
\]
with $\X=\calH_g$ or $\M_g$ or $\Mnh_g$. Notice that for a fixed value of $g$, $\fraka_n(\X)$ does not depend on $\X$, and 
$\fraka_n(\X)=0$ for odd $n$.

In order to go deeper in the limit distribution, we will also look at the `next term' of the limit of $\frac{S_n(q,\X)}{q^{\dim \X+n/2}}$ when $\X=\M_g$. 
\begin{definition}
For every $g \geq 2$ and $n \geq 1$, let 
\[ 
\bb_n(\M_g)\colonequals
-\lim_{q \to \infty} \sqrt{q} \left( \frac{S_n(q, \M_g)}{q^{3g-3+n/2}}-\fraka_n(\M_g)\right). 
\]    
\end{definition}

To state our results, we need to recall basic facts about the representations of $\USp_{2g}$ with coefficients in $\Q_{\ell}$, where $\ell$ is a prime distinct from the characteristic of $\F_q$. The irreducible representations $V_{\lambda}$ of $\USp_{2g}$ are indexed by the highest weight $\lambda=(\lambda_1,\ldots,\lambda_g)$ with $\lambda_1 \geq \ldots \geq \lambda_g \geq 0$. The corresponding characters $\chi_{\lambda}$ are the symplectic Schur polynomials $\bs_{\langle\lambda\rangle}(x_1,\ldots,x_g) \in \Z[x_1,\ldots,x_g,x_1^{-1},\ldots,x_g^{-1}]$ in the sense that if $A \in \USp_{2g}$ has eigenvalues $\alpha_1,\ldots,\alpha_g,\alpha_1^{-1},\ldots,\alpha_g^{-1}$ then $\chi_{\lambda}(A)=\bs_{\langle\lambda\rangle}(\alpha_1,\ldots,\alpha_g)$; see \cite[Prop.~24.22 and (A.45)]{Fulton-Harris}. In the notation we will suppress the $\lambda_j$ that are $0$. Put $\lvert\lambda\rvert=\lambda_1+\ldots+\lambda_g$ and note that $V_{\lambda}^{\vee} \cong V_{\lambda}$. 

\begin{theorem} \label{theorem:H0H1} Let $V=V_{(1)}$ denote the standard representation.
\begin{enumerate}
\item \label{item:1}
Let $\X=\calH_g$, $\M_g$, $\Mnh_g$ for any $g \geq 2$, or $\M_{1,1}$. For every $n \geq 1$, 
$\fraka_n(\X)$ is equal to  the number of times the trivial representation appears in the $\USp_{2g}$-representation $V^{\otimes n}$. \footnote{This is  precisely \cite[Thm.~3.8]{BHLR22}, but we will give a different proof.}  
\item For every $g \geq 3$ and $n \geq 1$, $\bb_n(\M_g)$ is equal to the number of times the representation $V_{(1,1,1)}$ appears in the $\USp_{2g}$-representation $V^{\otimes n}$. In particular $\bb_n(\M_g)=0$ for $n$ even. \label{item:2}
\item For every $n \geq 1$, $\bb_n(\M_2)=0$. \label{item:3}
\item \label{item:4} For every $g \geq 2$ and $n \geq 1$, 
\[
\fraka_n(\M_g)-\frac{\bb_n(\M_g)}{\sqrt{q}}=  \frac{S_n(q,\M_g)}{q^{3g-3+n/2}}
  +O(q^{-1}).
 \]
\item \label{item:5} For every $g \geq 3$ and $n \geq 1$ we have 
\begin{equation} \label{eq:bn}
\bb_n(\M_g)= \int_{{(\theta_1,\ldots,\theta_g)}\in [0,\pi]^g } w_1^n \Bigl(\frac{1}{6}w_1^3-\frac{1}{2}w_1w_2+\frac{1}{3}w_3-w_1 \Bigr) \, d m_g.
\end{equation}
\end{enumerate}
\end{theorem}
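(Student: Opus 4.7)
The starting point is the Grothendieck--Lefschetz trace formula applied to the moduli stack $\M_g$ over $\F_q$ with coefficients in the $\ell$-adic local system $\VV^{\otimes n}$, where $\VV = R^1\pi_* \Q_\ell$ comes from the universal curve. This yields $S_n(q,\M_g) = \sum_i (-1)^i \Tr(\Fr_q \mid H^i_c(\M_{g,\Fbar_q}, \VV^{\otimes n}))$. Since the geometric monodromy of $\VV$ on $\M_g$ is the full group $\mathrm{Sp}_{2g}$, the decomposition $V^{\otimes n} \cong \bigoplus_\lambda m_\lambda V_\lambda$ into irreducible $\USp_{2g}$-representations induces an analogous decomposition $\VV^{\otimes n} \cong \bigoplus_\lambda m_\lambda \VV_\lambda$ of local systems on $\M_g$.

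By Poincaré duality on the smooth Deligne--Mumford stack $\M_g$ of dimension $d = 3g-3$, together with the self-duality $\VV^\vee \cong \VV(1)$, each group $H^{2d-i}_c(\M_g, \VV^{\otimes n})$ is related to $H^i(\M_g, \VV^{\otimes n})$ via an appropriate Tate twist. The top group $H^{2d}_c$ corresponds to $H^0(\M_{g,\Fbar_q}, \VV^{\otimes n})$, which by the monodromy statement equals the space of $\mathrm{Sp}_{2g}$-invariants of $V^{\otimes n}$; only trivial summands survive and their Frobenius weight is $n$, so this contributes exactly $m_0 \, q^{d+n/2}$ with $m_0$ the multiplicity of the trivial representation, yielding part~\ref{item:1}. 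The next group $H^{2d-1}_c$ is governed by $H^1(\M_g, \VV_\lambda)$; here the decisive input is the work of Petersen \cite{petersen, petersen3}, which (building on Johnson's computation of the abelianization of the Torelli group and Hain's presentation of the Torelli Lie algebra) shows that for $g \ge 3$ the group $H^1(\M_{g,\Fbar_q}, \VV_\lambda)$ vanishes except when $\lambda = (1,1,1)$, in which case it is one-dimensional and pure of Tate type $\Q_\ell(-2)$, while for $g = 2$ the corresponding $H^1$ groups vanish or have weight strictly greater than $|\lambda|+1$. Tracking Tate twists through Poincaré duality produces the contribution $-m_{(1,1,1)} \, q^{d+(n-1)/2}$ from $H^{2d-1}_c$, and Deligne's purity theorem bounds all remaining cohomology at weight $\le d + n/2 - 1$. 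This proves parts~\ref{item:2}, \ref{item:3}, and~\ref{item:4} simultaneously; vanishing for even $n$ is automatic since $|\lambda| = 3$ is odd.

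For part~\ref{item:5}, the multiplicity $m_{(1,1,1)} = \langle V_{(1,1,1)}, V^{\otimes n}\rangle_{\USp_{2g}}$ is computed by integrating characters against Haar measure on $\USp_{2g}$; the Weyl integration formula on the maximal torus produces precisely the measure $dm_g$ on $[0,\pi]^g$ with $\chi_V = w_1$. To obtain the character of $V_{(1,1,1)}$ one uses the symplectic decomposition $\Lambda^3 V \cong V_{(1,1,1)} \oplus V_{(1)}$ (valid for $g \ge 3$), so that $\chi_{V_{(1,1,1)}} = \chi_{\Lambda^3 V} - \chi_V$. Writing $\chi_{\Lambda^3 V} = e_3(x_1, x_1^{-1}, \dots, x_g, x_g^{-1})$ and applying Newton's identities to express $e_3$ in terms of the power sums $p_k$, which satisfy $p_k = w_k$ under the substitution $x_j = e^{i\theta_j}$, yields $\chi_{V_{(1,1,1)}} = \tfrac{1}{6} w_1^3 - \tfrac{1}{2} w_1 w_2 + \tfrac{1}{3} w_3 - w_1$, which is exactly the factor appearing in~\eqref{eq:bn}.

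The main obstacle is the careful invocation of Petersen's cohomological results: one needs not only the vanishing of $H^1(\M_g, \VV_\lambda)$ outside $\lambda = (1,1,1)$ when $g \ge 3$ (and the analogous statements for $g = 2$), but also the crucial purity claim that $H^1(\M_g, \VV_{(1,1,1)})$ is pure of Tate weight exactly $4$ rather than strictly larger. This purity, related to the Gross--Schoen cycle providing a non-trivial class in the Chow group of the Deligne--Mumford compactification, is what makes the $O(q^{-1})$ error term in part~\ref{item:4} genuinely sharp. Once this geometric input is in hand, the remaining steps reduce to trace-formula bookkeeping and standard symmetric-function manipulations.
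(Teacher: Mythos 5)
Your proposal is correct and follows essentially the same route as the paper: Lefschetz trace formula with the symplectic local systems $\VV_\lambda$, Deligne's weight bounds plus Poincar\'e duality to isolate the contributions of $H_c^{2\dim\X}$ and $H_c^{2\dim\X-1}$, the Johnson--Hain--Petersen description of $H^1(\M_g,\VV_\lambda)$ with the Gross--Schoen purity statement, and Weyl integration for the explicit formula. The only cosmetic differences are that the paper keeps the Tate twists explicit in the decomposition of $\VV^{\otimes n}$ and quotes the symplectic Schur polynomial $\bs_{\langle(1,1,1)\rangle}$ directly, whereas you rederive it via $\Lambda^3 V \cong V_{(1,1,1)}\oplus V$ and Newton's identities, which is equivalent.
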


\begin{proof} Poincar\'e duality gives a symplectic pairing on the first $\ell$-adic \'etale cohomology group of a curve. We will be interested in the action of Frobenius on these cohomology groups and since we need to take the size of the eigenvalues of Frobenius into account we will consider representations of $\GSp_{2g}$. Let $\Q_{\ell}(-1)$ denote the \emph{multiplier representation} or \emph{similitude character}; if we identify $\GSp_{2g}$ as the group of automorphisms of a $2g$-dimensional vector space that preserve a symplectic form $s$ up to scaling, then $\Q_{\ell}(-1)$ is the representation $\eta$ that sends an element of $\GSp_{2g}(\Q_\ell)$ to the factor by which it scales~$s$. Let $\Q_{\ell}(1)$ be the inverse (or dual) of $\Q_{\ell}(-1)$, and for an integer $j$ put $\Q_{\ell}(j)=\Q_{\ell}(\sgn j)^{\otimes \lvert j\rvert}$. For a representation $U$ put $U(j)\colonequals U \otimes \Q_{\ell}(j)$. With the standard representation $W$ of $\GSp_{2g}$ we can get irreducible representations $W_{\lambda}$, for $\lambda=(\lambda_1,\ldots,\lambda_g)$ with $\lambda_1 \geq \ldots \geq \lambda_g \geq 0$, using the same construction as for $\USp_{2g}$; see \cite[(17.9)]{Fulton-Harris}. If we homogenize the polynomial $s_{\langle\lambda\rangle}(x_1,\ldots,x_g,t)$ to degree $\lvert\lambda\rvert$ using a variable $t$ of weight $2$ and with $x_i$ of weight $1$ for $i=1,\ldots,g$, then for $A \in \GSp_{2g}$ with $\eta(A)=s$ and eigenvalues $\alpha_1,\ldots,\alpha_g,s\alpha_1^{-1},\ldots,s\alpha_g^{-1}$ we have $\chi_{\lambda}(A)=s_{\langle\lambda\rangle}(\alpha_1,\ldots,\alpha_g,s)$. Now, for every $n$, there are integers $c_{\lambda,n} \geq 0$ such that 
\begin{equation} \label{eq:tensor}
W^{\otimes n} \cong \bigoplus_{\lvert\lambda\rvert \leq n} W_{\lambda}^{\oplus c_{\lambda,n}}\bigl((-n+\lvert\lambda\rvert)/2\bigr).
\end{equation}
Note that if $n \not \equiv \lvert\lambda\rvert \bmod 2$ then $c_{\lambda,n}=0$. Note also that \eqref{eq:tensor} holds with the same $c_{\lambda,n}$ when replacing $\GSp_{2g}$ with $\USp_{2g}$, i.e.~replacing $W$ by $V$ and ignoring the multiplier representation. Note also that $W_{\lambda}^{\vee} \cong W_{\lambda}(\lvert\lambda\rvert)$. 

Let $\X=\calH_g$, $\M_g$ or $\Mnh_g$ for any $g \geq 2$, or $\X=\M_{1,1}$. Let $\pi: \Y \to \X $ be the universal object and define the $\ell$-adic local system $\VV=R^1 \pi_{*} \Q_{\ell}$. To any irreducible representation of $\GSp_{2g}$ (the symplectic pairing coming as above from the first cohomology group of the curves) corresponding to $\lambda$ we can then use Schur functors to define a local system $\VV_{\lambda}$.  Let $H^j_c$ denote  compactly supported $\ell$-adic cohomology and $\Fr_q$ the geometric Frobenius acting on $\X \otimes \Fbar_q$. For general results on \'etale cohomology of stacks, see for instance \cite{sun}. 

For almost all primes $p$ we have $H^j_c(\X \otimes \CC, \VV_{\lambda}) \cong H^j_c(\X \otimes \Qbar_p, \VV_{\lambda}) \cong H^j_c(\X \otimes \Fbar_p, \VV_{\lambda})$. From this we get bounds on $\dim_{\Q_{\ell}} H^j_c(\X \otimes \Fbar_p, \VV_{\lambda})$ that are independent of $p$. This will tacitly be used below when we let $q$ go to infinity. 

Put $\Xbar=\X \otimes \Fbar_q$. The Lefschetz trace formula and \eqref{eq:tensor} then tell us that 
\begin{align*}
S_n(q,\X) 
&=\sum_{j=0}^{2\dim \X} (-1)^j\Tr(\Fr_q,H^j_c(\Xbar, \VV_1^{\otimes n}))\\
&=\sum_{\lambda} c_{\lambda,n}  \,  \sum_{j=0}^{2 \dim \X} (-1)^j \Tr(\Fr_q,H^j_c(\Xbar, \VV_{\lambda})) \, q^{(n-\lvert\lambda\rvert)/2}\,;
\end{align*}
compare \cite[\S8]{BFvdG}. 
Since $\VV_{\lambda}$ is pure of weight $\lambda$, it follows from Deligne's theory of weights \cite{deligne2,sun} that the trace of Frobenius on $H^j_c(\Xbar,\VV_{\lambda})$ is equal (after choosing an embedding of $\Qbar_{\ell}$ in~$\CC$) to a sum of complex numbers with absolute value at most $q^{(j+\lvert\lambda\rvert)/2}$. 
 
From this we see that only when $j=2\dim \X$ can we get a contribution to $\fraka_n(\X)$.
Since $\X$ is a smooth Deligne--Mumford stack, Poincar\'e duality shows that for every $i$ with $ 0 \leq i \leq 2\dim \X$, we have
\[
H_c^{2\dim \X-i}(\Xbar,\VV_{\lambda}) \cong H^i(\Xbar,\VV_{\lambda})^{\vee}(-\dim \X-\lvert\lambda\rvert).
\]
The zeroth cohomology group of a local system consists of the global invariants, and among the irreducible local systems, only the constant local system $\VV_{(0)} \cong \Q_{\ell}$ has such. Moreover, $H^0(\Xbar,\Q_{\ell})$  is one-dimensional, since $\X$ is irreducible. Finally, since the action of $\Fr_q$ on $H^0(\Xbar,\Q_{\ell})$ is trivial, we get by Poincar\'e duality that $\Fr_q$ acts on $H_c^{2\dim \X}(\Xbar,\Q_{\ell})$ by multiplication by $q^{\dim \X}$. It follows that $\fraka_n(\X)=c_{(0),n}$. This proves~\eqref{item:1}.

Assume now that $g \geq 3$. From the work of Johnson and Hain we know that $H^{1}(\M_g,\VV_{\lambda})$ is nonzero if and only if $\lambda=(1,1,1)$; see \cite{johnson}, \cite{hain} and \cite[Thm.~4.1 and Cor.~4.2]{kabanov}. In these references, it is the rational Betti cohomology group of $\M_g$ over the complex numbers that is considered. 
 Furthermore, $H^{1}(\M_g \otimes \Fbar_q,\VV_{(1,1,1)})$ is one-dimensional and generated by the Gross--Schoen cycle, which lives in the second Chow group; see see~\cite[Rem.~12.1 and Ex.~6.4]{petersenetal2}. Since this result also holds in $\ell$-adic cohomology, as noted in \cite[\S1.2]{petersenetal2}, the action of $\Fr_q$ on this cohomology group is by multiplication by~$q^2$

Recall that $\dim \M_g=3g-3$. By Poincaré duality we find that the action of $\Fr_q$ on $H_c^{6g-7}(\M_g \otimes \Fbar_q,\VV_{(1,1,1)})$ is by $q^{3g-3+3-2}$. We can now conclude the following. If $n$ is even then $c_{(1,1,1),n}=0$, and so every eigenvalue of Frobenius contributing to $q^{3g-3+n/2}c_{(0),n}-S_n(q,\M_g)$ has absolute value at most $q^{3g-4+n/2}$. If $n$ is odd then $c_{(0),n}=0$, and so there are no eigenvalues of Frobenius contributing to $S_n(q,\M_g)$ of absolute value  $q^{3g-3+n/2}$ and we can conclude by the above that $\bb_n(\M_g)=c_{(1,1,1),n}$. This proves~\eqref{item:2}. 

Because of the hyperelliptic involution, $H_c^{i}(\M_2,\VV_{\lambda})=0$ for all $\lambda$ such that $\lvert\lambda\rvert$ is odd. Moreover, $H^{1}(\M_2,\VV_{\lambda})$ is nonzero precisely when $\lambda=(2,2)$. It is then one-dimensional and $\Fr_q$ acts by multiplication by~$q^3$. This result is proven but not stated explicitly in \cite{petersen,petersen3}, as explained in \cite[Cor.~6.7]{watanabe}. By Poincaré duality, $\Fr_q$ acts on $H_c^{5}(\M_2,\VV_{2,2})$ by multiplication by~$q^{3+4-3}$. Hence, for all even $n$, every eigenvalue of Frobenius contributing to $q^{3+n/2}c_{(0),n}-S_n(q,\M_2)$ has absolute value at most $q^{3+(n-2)/2}$. This proves~\eqref{item:3}. 

Statement~\eqref{item:4} is only a reformulation of the properties of $\fraka_n(\M_g)$ and $\bb_n(\M_g)$ proven above.

Finally, for every $k \geq 1$, put $p_k(x_1,\ldots,x_g) \colonequals\sum_{i=1}^g (x_i^k+x_i^{-k})$. The polynomial $\bs_{\langle(1,1,1)\rangle}(x_1,\ldots,x_g)$ equals   
\[
\frac{1}{6}p_1^3-\frac{1}{2}p_1p_2+\frac{1}{3}p_3-p_1. 
\]
The irreducible representations of $\USp_{2g}$ are self-dual. As a consequence, if $U$ is a representation of $\USp_{2g}$ then the number of times the representation $V_{\lambda}$ appears in $U$ equals the number of times the trivial representation appears in $V_{\lambda} \otimes U$. If $A \in \USp_{2g}$ has eigenvalues $\alpha_1,\ldots,\alpha_g,\alpha_1^{-1},\ldots,\alpha_g^{-1}$, with $\alpha_j=e^{i\theta_j}$ for $j=1,\ldots,g$, then $p_k(\alpha_1,\ldots,\alpha_g)=w_k(\theta_1,\ldots,\theta_g)$. Statement~\eqref{item:5} now follows from \eqref{item:2}. 
\end{proof}

\begin{remark} \label{remark-elliptic} 
Why did we not define $\bb_n$ for $\M_{1,1}$? For every prime $p$ and $n > 0$ it follows from \cite{deligne3} (see also \cite{birch} and \cite[\S2]{BFvdG}) that 
\begin{align*}
\sum_{j=0}^{2} (-1)^j\Tr(\Fr_p,H^j_c(\M_{1,1} \otimes \Fbar_p, \VV_{(n)}))
&=-\Tr(\Fr_p,H^1_c(\M_{1,1} \otimes \Fbar_p, \VV_{(n)}))\\
&=-1-\Tr(T_p,\bS_{n+2}),  
\end{align*}
where $T_p$ is the $p$th Hecke operator acting on $\bS_{n+2}$, the (complex) vector space of elliptic modular cusp forms of level $1$ and weight $n+2$. Moreover, for every prime power $q$, the eigenvalues of $\Fr_q$ acting on $H^1_c(\M_{1,1} \otimes \Fbar_p, \VV_{(n)})$ will have absolute value $q^{(n+1)/2}$. It is in general not clear that the limit 
\begin{equation} \label{equation-limit}
-\lim_{q \to \infty} \sqrt{q} \left( \frac{S_n(q, \M_{1,1})}{q^{1+n/2}}-\fraka_n(\M_{1,1})\right),
    \end{equation}
which would be the way to define $\bb_n(\M_{1,1})$, always exists when $n$ is even. (For odd $n$, $S_n(q,\M_{1,1})=0$, hence the limit \eqref{equation-limit} will be $0$.)

For even $0 \leq n \leq 8$, the limit  \eqref{equation-limit} is also $0$ since there are no elliptic cusp forms level $1$ and weight less than or equal to $10$. We then have that $S_{10}(p,\M_{1,1})=42p^6-\Tr(T_p, \bS_{12})+O(p^5)$ and $S_{12}(p,\M_{1,1})=132p^7-11p \cdot \Tr(T_p, \bS_{12})+O(p^6)$. 
The so-called Frobenius angle, $0 \leq \varphi_p \leq \pi$, of the Hecke eigenform (the Ramanujan $\Delta$ function) in the one-dimensional space $\bS_{12}$ is defined by $a_p\colonequals\Tr(T_p,\bS_{12})=2p^{11/2}\cos \varphi_p$. The Sato--Tate conjecture for $\Delta$ (proven in \cite{BLGHT11}) then tells us that there are sequences of primes $p'_1,p'_2,\ldots$ and $p''_1,p''_2,\ldots$ such that the Frobenius angles of $a_{p'_1},a_{p'_2},\ldots$ (respectively $a_{p''_1},a_{p''_2},\ldots$) are all between $0$ and $\pi/3$ (respectively $2\pi/3$ and $\pi$). This implies that the limit \eqref{equation-limit} does not exist for $n=10$ and $n=12$. It is unlikely to exist for even $n>12$, but the limit will then involve an interplay between different Hecke eigenforms.
\end{remark}

In~\cite[Thm.~3.9]{BHLR22} it is shown that for fixed $g$ we have
\[\lim_{n\rightarrow\infty}\fraka_{2n}(\M_g)^{1/(2n)}=2g.\]
In the remainder of this section we prove a similar result for $\bb_{2n+1}(\M_g)$.
\begin{proposition}
\label{prop:bn}
    For fixed $g \geq 3$ one has
    \[
    \lim_{n\rightarrow\infty} \bb_{2n+1}(\M_g)^{1/(2n+1)}=2g.
    \]
\end{proposition}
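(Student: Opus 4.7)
The plan is to use Theorem~\ref{theorem:H0H1}\eqref{item:2}, which identifies $\bb_n(\M_g)$ with the multiplicity of $V_{(1,1,1)}$ in the $n$th tensor power $V^{\otimes n}$ of the standard representation of $\USp_{2g}$, and to sandwich this multiplicity between two quantities whose $(2n+1)$-th roots tend to~$2g$.

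The upper bound is tautological: since the $V_{(1,1,1)}$-isotypic component embeds into $V^{\otimes(2n+1)}$, one has $\bb_{2n+1}(\M_g)\cdot \dim V_{(1,1,1)}\le \dim V^{\otimes(2n+1)}=(2g)^{2n+1}$, so
\[
\limsup_{n\to\infty}\bb_{2n+1}(\M_g)^{1/(2n+1)}\le 2g.
\]

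For the matching lower bound, the key input is that for $g\ge 3$ the representation $V_{(1,1,1)}$ is a direct summand of $\Lambda^{3}V\subset V^{\otimes 3}$ (by Littlewood's branching formula, $\Lambda^{3}V\cong V_{(1,1,1)}\oplus V_{(1)}$ in this range — this is precisely where the hypothesis $g\ge 3$ is used). Tensoring the inclusion $V_{(1,1,1)}\hookrightarrow V^{\otimes 3}$ with $V^{\otimes(2n-2)}$ and applying $\dim\mathrm{Hom}_{\USp_{2g}}(V_{(1,1,1)},-)$ yields
\[
\bb_{2n+1}(\M_g)\;\ge\;\dim\mathrm{Hom}_{\USp_{2g}}\bigl(V_{(1,1,1)},\,V_{(1,1,1)}\otimes V^{\otimes(2n-2)}\bigr).
\]
By self-duality of the irreducible representations of $\USp_{2g}$, the right-hand side equals $\dim\mathrm{Hom}_{\USp_{2g}}(V_{(0)},V_{(1,1,1)}^{\otimes 2}\otimes V^{\otimes(2n-2)})$. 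Since $V_{(1,1,1)}^{\otimes 2}\cong\mathrm{End}(V_{(1,1,1)})$ contains the trivial representation with multiplicity~$1$ (the scalars, by Schur's lemma), this dimension is in turn bounded below by $\dim\mathrm{Hom}_{\USp_{2g}}(V_{(0)},V^{\otimes(2n-2)})=\fraka_{2n-2}(\M_g)$. Invoking \cite[Thm.~3.9]{BHLR22}, we then conclude
\[
\bb_{2n+1}(\M_g)^{1/(2n+1)}\ge \fraka_{2n-2}(\M_g)^{1/(2n+1)}=\bigl(\fraka_{2n-2}(\M_g)^{1/(2n-2)}\bigr)^{(2n-2)/(2n+1)}\longrightarrow 2g.
\]

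Combining the two bounds establishes the proposition. There is no real obstacle: everything is a short representation-theoretic manipulation reducing the question to the already-known asymptotics of $\fraka_{2k}(\M_g)$.
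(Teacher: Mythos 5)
Your argument is correct, but it takes a genuinely different route from the paper's. You work entirely on the representation-theoretic side of Theorem~\ref{theorem:H0H1}\eqref{item:2}: the trivial upper bound $\bb_{2n+1}(\M_g)\le \dim V^{\otimes(2n+1)}=(2g)^{2n+1}$, and a lower bound obtained by splitting off $V_{(1,1,1)}\subset\Lambda^3 V\subset V^{\otimes 3}$ (valid precisely for $g\ge 3$, by \cite[Thm.~17.5]{Fulton-Harris} or Littlewood's rule), using self-duality and Schur's lemma to absorb $V_{(1,1,1)}^{\otimes 2}\supseteq V_{(0)}$, and thereby reducing to $\bb_{2n+1}(\M_g)\ge \fraka_{2n-2}(\M_g)$, whose root-asymptotics are known from \cite[Thm.~3.9]{BHLR22}; each of these steps checks out, including the exponent bookkeeping $(2n-2)/(2n+1)\to 1$. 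The paper instead works with the integral formula of Theorem~\ref{theorem:H0H1}\eqref{item:5}, writing $\bb_{2n+1}(\M_g)=\int_{[0,\pi]^g} w_1^{2n+1} f\, dm_g$ with $f=\frac16 w_1^3-\frac12 w_1w_2+\frac13 w_3-w_1$, and runs a Laplace-type estimate: the two points where $\lvert w_1\rvert=2g$ satisfy $w_1 f>0$, so after splitting the domain one squeezes the $(2n+1)$-th root between $2g-\eps$ and $2g$. Your approach is shorter and cleanly piggybacks on the already-proved asymptotics of $\fraka_{2n}(\M_g)$, at the cost of invoking the plethysm fact about $\Lambda^3 V$; the paper's analytic argument is self-contained (independent of \cite[Thm.~3.9]{BHLR22}) and makes transparent why the sign structure of $f$ near the extremes of $w_1$, rather than any representation-theoretic positivity, drives the growth rate — a template that would apply verbatim to other moment sequences given by integrating $w_1^{n}$ against a fixed function not vanishing at the corner points.
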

\begin{proof}
Consider the functions $w_1$ and 
$f \colonequals  \frac{1}{6}w_1^3-\frac{1}{2}w_1w_2+\frac{1}{3}w_3-w_1$
on $X\colonequals [0,\pi]^g$. The maximum value of $\lvert w_1\rvert$ is attained at exactly two points in $X$,
namely the points $x\colonequals (0,\ldots,0)$ and $y \colonequals  (\pi,\ldots,\pi)$.
We have $w_1(x) = 2g$ and $w_1(y) = -2g$, and we also have 
$f(x) = (2/3)(2g^3-3g^2-2g) > 0$ and $f(y) = (-2/3)(2g^3-3g^2-2g) < 0$.

Let $V$ be the (open) subset of $X$ where $w_1 f > 0$,
so that $x$ and $y$ both lie in $V$, and let 
$W = X\setminus V$. Let $M$ be the supremum of $\lvert w_1\rvert$ on $W$, so that $M < 2g$.
For $\eps\in (0,2g-M)$ let $U_\eps$ be the subset of $X$ where 
$\lvert w_1 \rvert > 2g - \eps$, so that $U_\eps\subset V$, and
let $V_\eps = V\setminus U_\eps$.

Then, for every $n$, we have
\begin{align*}
\bb_{2n+1}(\M_g) 
 &= \int_X w_1^{2n+1} f \, d m_g \\
 &= \int_{U_\eps} w_1^{2n+1} f \, d m_g + \int_{V_\eps} w_1^{2n+1} f \, d m_g + \int_{W} 
 w_1^{2n+1} f \, d m_g \\
 &\ge \int_{U_\eps} w_1^{2n+1} f \, d m_g +  \int_W w_1^{2n+1} f \, d m_g \\
 &\ge (2g-\eps)^{2n+1} \int_{U_\eps} \lvert f\rvert \, d m_g - M^{2n+1} \int_W \lvert f\rvert\, d m_g,
 \end{align*}
where the third line follows from the fact that $w_1^{2n+1} f$ is positive on $V_\eps$ and
the fourth follows from the bounds on $\lvert w_1\rvert$ in $U_\eps$ and $W$.
Let $A\colonequals \int_{U_\eps} \lvert f\rvert \, d m_g$ and $B\colonequals\int_W \lvert f\rvert\, d m_g.$ Then
\[
\bb_{2n+1}(\M_g)^{1/(2n+1)}
  \ge (2g-\eps)\biggl(A - \Bigl(\frac{M}{2g-\eps}\Bigr)^{2n+1} B\biggr)^{1/(2n+1)},
\]
and the rightmost factor tends to $1$ as $n\to\infty$. Therefore,
$\liminf \bb_{2n+1}(\M_g)^{1/(2n+1)} \ge 2g.$

We also have
\begin{align*}
\bb_{2n+1}(\M_g) 
 &= \int_{U_\eps} w_1^{2n+1} f \, d m_g + \int_{X\setminus U_\eps} w_1^{2n+1} f \, d m_g\\
 &\le (2g)^{2n+1}\int_{U_\eps} \lvert f\rvert \, d m_g 
 +  (2g-\eps)^{2n+1} \int_{X\setminus U_\eps} \lvert f\rvert \, d m_g,
\end{align*}
so if we let $C \colonequals \int_X  \lvert f\rvert \, d m_g$ then
$\bb_{2n+1}(\M_g) \le  (2g)^{2n+1} A + (2g-\eps)^{2n+1} C,$ so
\[
\bb_{2n+1}(\M_g)^{1/(2n+1)}
  \le 2g \biggl(A + \Bigl(\frac{2g-\eps}{2g}\Bigr)^{2n+1} C\biggr)^{1/(2n+1)}.
\]
Once again the rightmost factor tends to $1$ as $n\to\infty$, so 
$\limsup \bb_{2n+1}(\M_g)^{1/(2n+1)} \le 2g,$
and the proposition is proven.
\end{proof}

\begin{remark}

Let $\X_g$ be either $\M_g$ or $\M_{g,1}$, where the latter denotes the moduli space of curves of genus $g$ together with a marked point. For any $k \geq 0$, $\lambda$ as in the proof of Theorem~\ref{theorem:H0H1}, 
and 
$g \geq \frac{3}{2}(k+1+|\lambda|)$, there is an isomorphism in Betti cohomology, $H^k(\X_g,\VV_{\lambda}) \cong H^k(\X_{g+1},\VV_{\lambda})$; see \cite[Thm.~1.1]{Looijenga} and \cite{Wahl}. These are called \emph{stable} cohomology groups. 

In \cite[Thm.~3.5.12]{BDPW} there is an alternative formula to that of \cite[Thm.~1.1]{Looijenga} for the dimensions of the stable cohomology groups of $\M_g$. Using this formula one can prove, in a way analogous to \cite[Thm.~7.0.2.]{BDPW}, that if $k < |\lambda|/3$ and $g \geq \frac{3}{2}(k+1+|\lambda|)$, then $H^k(\M_{g},\VV_{\lambda})=0$.
It follows that for each $k$ there are finitely many $\lambda$ for which $H^k(\M_g,\VV_{\lambda})$, with $g = \lceil \frac{3}{2}(k+1+|\lambda|) \rceil$, is non-zero. Again using \cite[Thm.~3.5.12]{BDPW} we find for instance that there are five such $\lambda$ for $k=2$ (see below) and fourteen such $\lambda$ for $k=3$. Note also that for $g \geq \frac{3}{2}(k+1+|\lambda|)$, $H^k(\M_{g},\VV_{\lambda})$ is zero if $k+|\lambda|$ is odd.

The result above also holds in $\ell$-adic cohomology. Moreover, 
every eigenvalue of Frobenius $F_q$ acting on the compactly supported $\ell$-adic cohomology group $H_c^{6g-6-k}(\M_g,\VV_{\lambda})$, for $g \geq \frac{3}{2}(k+1+|\lambda|)$, is equal to $q^{3g-3+(|\lambda|-k)/2}$; see for instance \cite{petersenetal2}. 

In forthcoming work by Miller, Patzt, Petersen, Randal-Williams entitled "Uniform twisted homological stability" it is shown that 
for $g \geq 3k+3$ (i.e. a bound that is independent of $\lambda$), there is an isomorphism in Betti cohomology, $H^k(\M_{g,1},\VV_{\lambda}) \cong H^k(\M_{g+1,1},\VV_{\lambda})$. 
It should be possible to show that this leads to an isomorphism $H^k(\M_{g},\VV_{\lambda}) \cong H^k(\M_{g+1},\VV_{\lambda})$ for all $g \geq g_{\rm stab}(k)$, with $g_{\rm stab}(k)$ a function that only depends upon $k$, cf. \cite{CM} and \cite[Rem.~3.5.11]{BDPW}. 
If we assume this to be true then we can combine the results above with the techniques in the proof of Theorem~\ref{theorem:H0H1} to conclude the following.

Let $d_{n,\lambda}$ denote the number of times the representation $V_{\lambda}$ appears in the $\USp_{2g}$-representation $V^{\otimes n}$. Fix any $K\geq 0$. Then,
for any $n \geq 1$ and $g \geq g_{\rm stab}(K)$, 
we have
\begin{equation} \label{eq-c}
\sum_{k=0}^K (-1)^k \, \cc_{k,n} \cdot q^{-k/2}=S_n(\M_g,q)/q^{3g-3+n/2}+O(q^{-(K+1)/2}), 
\end{equation}
where
\[
\cc_{k,n}=\sum_{\lambda}d_{n,\lambda} \cdot \dim H^k(\M_{g_{\rm stab}(k)},\VV_{\lambda}).
\]
From \cite[Thm.~3.5.12]{BDPW} we can for instance compute that 
\[
\cc_{2,n}=d_{n,(0)}+d_{n,(1^2)}+d_{n,(1^4)}+d_{n,(1^6)}+d_{n,(2^2,1^2)}. 
\]    
Note that by Theorem~\ref{theorem:H0H1}, $\cc_{0,n}=\fraka_n(\M_g)$ for $g \geq 2$, $\cc_{1,n}=\bb_n(\M_g)$ for $g \geq 3$ and Equation~\eqref{eq-c} holds with $g_{\rm stab}(0)=2$ and $g_{\rm stab}(1)=3$. 
\end{remark}


\section{Convergence of moments of the measures \texorpdfstring{$\mu_{q,g}$}{mu q g}} \label{sec:cvmoments}
Let $\M_g'(\F_q)$ be the set of $\F_q$-isomorphism classes of curves of genus $g>1$ over $\F_q$. If $g=1$, we abuse notation and let $\M_1=\M_{1,1}$ be the moduli space of elliptic curves and $\M'_1(\F_q)$ the set of $\F_q$-isomorphism classes of elliptic curves over $\F_q$.  Define a measure $\mu_{q,g}$ by
\[\mu_{q,g} \colonequals \frac{1}{\# \M_g(\F_q)} \sum_{C\in \M_g'(\F_q)} \frac{\delta_{\tau(C)}}{\#\Aut_{\F_q}(C)}\,,\]
where $\tau(C)\colonequals\Tr(C)/\sqrt{q}$ is the \emph{normalized trace} of $C$ and $\delta_{\tau(C)}$ is the Dirac $\delta$ measure supported at~$\tau(C)$.
We see that $\mu_{q,g}$ is a discrete probability measure on $I_g\colonequals [-2g,2g]$, since 
\begin{align*}
\mu_{q,g}(I_g) &= \frac{1}{\# \M_g(\F_q)} \sum_{C\in \M_g'(\F_q)} \frac{1}{\#\Aut_{\F_q}(C)} \\
&= \frac{1}{\# \M_g(\F_q)} \sum_{C\in \M_g(\F_q)} \underbrace{\sum_{C' \in \Twist(C)} \frac{1}{\#\Aut_{\F_q}(C)}}_{=\,1 \; \textup{by \cite[Prop.~5.1]{VdG92}}} = 1.
\end{align*}
We can introduce $\Nq_{q,g}(\tau)$  defined by 
\[
\Nq_{q,g}(\tau)\colonequals \frac{1}{\# \M_g(\F_q)}
\ \sum_{C \in \M_g'(\F_q), \tau(C)=\tau} 
\frac{1}{\#\Aut_{\F_q}(C)}
\]
and rewrite $\mu_{q,g} = \sum_{\tau \in I_g} \Nq_{q,g}(\tau) \delta_\tau$. 
Note that the definition of $\Nq_{q,g}(\tau)$ differs from the ones of \cite[App.~B]{LRRS} and \cite[\S4]{RRRSS}, in particular by a factor of $\sqrt{q}$ (this factor will appear again in Section~\ref{sec:experiment} but this definition is more natural for the measure).

From \cite[Rem.~3.5]{LachaudDis}, as a direct consequence of Katz--Sarnak results \cite[Thms.~10.7.12 and 10.8.2]{katz-sarnak}, there exists a probability measure $\mu_g \colon  I_g \to \R$ with a $\mathcal{C}^{\infty}$ density function $\Fd_g$ such that we have weak convergence of $\mu_{q,g}$ to $\mu_g$.
Writing
\[\Fd_g(\tau)=\int_{A_{\tau}} dm_g \; \textrm{\ with} \; A_{\tau}=\bigl\{(\theta_1,...,\theta_g)\in[0,\pi]^g:\,\textstyle\sum_{j}2\cos\theta_j = \tau\bigr\},\] we see this is equivalent to 
\begin{equation}\label{eq:eqweak}\lim_{q \to \infty} \int_{I_g} f \, d\mu_{q,g} = \int_{I_g} f(\tau)\Fd_g(\tau)\,d\tau
\end{equation}
 for all continuous functions $f\colon I_g \to \R$. 
 Moreover for all polynomial functions\footnote{In an earlier version and in \cite{BHLR22} following \cite[Cor.~4.3]{LachaudDis}, we wrote that this convergence rate holds for any continuous function. We cannot find a proof for this and prefer to state it now only for polynomial functions as Katz and Sarnark do. Fortunately this change has no consequence on the rest of \cite{BHLR22} : for instance Corollary~2.3 can be proven only using the pointwise convergence of the cumulative distributions which is equivalent to the weak convergence above.
} 
$P\colon I_g \to \R$
\begin{equation} \label{eq:weakconvergence}
\int_{I_g} P \, d\mu_{q,g} = \int_{I_g} P(\tau)\Fd_g(\tau)\,d\tau + O\left(\frac{1}{\sqrt{q}}\right).
\end{equation}
We will now find a refinement of \eqref{eq:weakconvergence} when $g \geq 2$. 
\begin{theorem} \label{th:betterKS}
Let
\begin{equation} \label{eq:Hd}
\Hd_g(\tau)=\int_{A_{\tau}}  \Bigl(\frac{1}{6}w_1^3-\frac{1}{2}w_1w_2+\frac{1}{3}w_3-w_1 \Bigr)\,  d m_g     
\end{equation}
be the function whose $n$th moments are equal to the numbers  $\bb_n(\M_g)$ given by the expression \eqref{eq:bn}. 
For $g\geq 2$ and every polynomial function $P : I_g \to \R$, we have
\begin{equation} 
\int_{I_g} P \, d\mu_{q,g} = \int_{I_g} P(\tau) \left(\Fd_g(\tau)-\frac{\Hd_g(\tau)}{\sqrt{q}}\right)\,d\tau + O\left(q^{-1}\right).
\end{equation}
\end{theorem}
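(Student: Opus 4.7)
The plan is to reduce by linearity to monomials $P(\tau)=\tau^n$ and then match moments on both sides using the asymptotic of Theorem~\ref{theorem:H0H1}\eqref{item:4}. For such a monomial, unwinding the definition of $\mu_{q,g}$ and grouping $\F_q$-isomorphism classes into their $\Fbar_q$-isomorphism classes gives
\[
\int_{I_g}\tau^n\,d\mu_{q,g} \;=\; \frac{1}{\#\M_g(\F_q)\,q^{n/2}}\sum_{C\in\M_g'(\F_q)}\frac{\Tr(C)^n}{\#\Aut_{\F_q}(C)} \;=\; \frac{S_n(q,\M_g)}{\#\M_g(\F_q)\,q^{n/2}}.
\]
On the other side, applying Fubini to the fibration $w_1=\tau$ and invoking the integral representations of $\fraka_n$ and $\bb_n$ from \cite[Thm.~2.1]{BHLR22} and Theorem~\ref{theorem:H0H1}\eqref{item:5} shows that $\int_{I_g}\tau^n\Fd_g\,d\tau = \fraka_n(\M_g)$ and $\int_{I_g}\tau^n\Hd_g\,d\tau = \bb_n(\M_g)$ (in the excluded case $g=2$ both sides of the second identity are zero, since $\bs_{\langle(1,1,1)\rangle}(x_1,x_2)$ vanishes identically and $\bb_n(\M_2)=0$ by Theorem~\ref{theorem:H0H1}\eqref{item:3}).

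Combining these identities with Theorem~\ref{theorem:H0H1}\eqref{item:4}, the theorem reduces to the point count
\[
\#\M_g(\F_q) \;=\; q^{3g-3}\bigl(1+O(q^{-1})\bigr).
\]
Indeed, once this is known, multiplying the asymptotic of item~\eqref{item:4} by $q^{3g-3}/\#\M_g(\F_q)$ and using that $\fraka_n(\M_g)$ and $\bb_n(\M_g)$ are bounded uniformly in $q$ yields $\int_{I_g}\tau^n\,d\mu_{q,g} = \fraka_n(\M_g) - \bb_n(\M_g)/\sqrt{q} + O(q^{-1})$, and the moment identities above identify the right-hand side with $\int_{I_g}\tau^n\bigl(\Fd_g-\Hd_g/\sqrt{q}\bigr)\,d\tau$.

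The one step requiring care is therefore the point count, which I would obtain from exactly the cohomological input already used in Theorem~\ref{theorem:H0H1}. Applying the Lefschetz trace formula to the constant local system $\VV_{(0)}\cong\Q_\ell$, together with Poincar\'e duality on the smooth Deligne--Mumford stack $\M_g$, shows that $H^{6g-6}_c(\M_g\otimes\Fbar_q,\Q_\ell)$ is one-dimensional with Frobenius acting by $q^{3g-3}$, while $H^{6g-7}_c$ is Poincar\'e-dual to $H^{1}(\M_g,\Q_\ell)$. The vanishing of the latter is the main point: the Johnson--Hain--Petersen classification recalled in items \eqref{item:2} and \eqref{item:3} of Theorem~\ref{theorem:H0H1} lists exactly the irreducible $\VV_\lambda$ on $\M_g$ with non-zero $H^1$ (namely $\VV_{(1,1,1)}$ for $g\geq 3$ and $\VV_{(2,2)}$ for $g=2$), and the constant system $\VV_{(0)}$ is not among them. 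Deligne's weight bound then controls every remaining $H^{6g-6-j}_c$ by $q^{3g-3-j/2}$ for $j\geq 2$, yielding the claim. This cohomological step is the sole substantive ingredient beyond Theorem~\ref{theorem:H0H1}; the rest of the argument is essentially linear-algebra bookkeeping.
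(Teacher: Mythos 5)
Your proposal is correct and follows essentially the same route as the paper: reduce by linearity to monomials, write $\int_{I_g}\tau^n\,d\mu_{q,g}=S_n(q,\M_g)/(\#\M_g(\F_q)\,q^{n/2})$, establish $\#\M_g(\F_q)=q^{3g-3}+O(q^{3g-4})$, and conclude via Theorem~\ref{theorem:H0H1}\eqref{item:4} together with the identification of $\fraka_n(\M_g)$ and $\bb_n(\M_g)$ as the moments of $\Fd_g$ and $\Hd_g$. Your explicit justification of the point count through the vanishing of $H^1(\M_g,\Q_{\ell})$ (and the observation that $\Hd_2\equiv 0$, matching $\bb_n(\M_2)=0$) merely spells out details the paper leaves implicit by referring back to the proof of Theorem~\ref{theorem:H0H1}.
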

\begin{proof}
Notice that 
\[
\frac{S_n(q,\M_g)}{\# \M_g(\F_q) \cdot q^{n/2}} = \int_{I_g} \tau^n \,d\mu_{q,g}.
\]
Using Deligne's theory of weights, as in the proof of Theorem~\ref{theorem:H0H1}, we find that 
\[
\#\M_g(\F_q)= \Tr \bigl(\mathrm{Fr}_q,H_c^{6g-6}(\M_g,\Q_{\ell})\bigr)+O\left(q^{3g-4}\right)=q^{3g-3}+O\left(q^{3g-4}\right),
\]
since $\M_g$ is irreducible of dimension $3g-3$. Hence 
\[
\frac{S_n(q,\M_g)}{\# \M_g(\F_q) \cdot q^{n/2}}= \frac{S_n(q,\M_g)}{q^{3g-3+n/2}}+ O(q^{-1}).\]
Using  Theorem~\ref{theorem:H0H1} \eqref{item:4} for $g \geq 2$, we then get
\begin{align*}
\int_{I_g} \tau^n \,d\mu_{q,g}
&= \frac{S_n(q,\M_g)}{\# \M_g(\F_q) \cdot q^{n/2}}   \\
&= \frac{S_n(q,\M_g)}{q^{3g-3+n/2}}+ O(q^{-1})\\ 
&= \fraka_n(\M_g) - \frac{\bb_n(\M_g)}{\sqrt{q}} + O(q^{-1}) \\
&= \int_{I_g} \tau^n \left(\Fd_g(\tau)-\frac{\Hd_g(\tau)}{\sqrt{q}}\right)\,d\tau + O\left(q^{-1}\right).
\end{align*}
\end{proof}


\section{The elliptic and hyperelliptic cases: results and experiments}
\label{sec:experiment}

Katz--Sarnak results show that for every interval $J\subseteq I_g$, the probability that a
random curve of genus $g$ over $\F_q$ (or a random  hyperelliptic curve of genus $g$ over $\F_q$) has
normalized trace in $J$ tends towards a fixed value as $q\to\infty$, this value being 
$\int_J \Fd_g(\tau) \,d\tau$, where $\Fd_g$ is the density function for the measure $\mu_g$
defined at the beginning of Section~\ref{sec:cvmoments}.  Here the interval $J$ is fixed, and
we let $q$ tend to infinity. One can wonder how rapid this convergence is.
For instance, suppose the interval $J$ has length~$x$. How large must $q$ become in order for
the actual probability that a normalized trace lies in $J$ is well-approximated  by the Katz--Sarnak prediction? Could it even be the case that the approximation is reasonably good when $q$ is as large as $1/x^2$, 
so that $x\approx 1/\sqrt{q}$ and there is exactly one integer $t$ with $t/\sqrt{q}\in J$?
In other words, can we use the Katz--Sarnak distribution to estimate the number of curves over
$\F_q$ with a given trace? Since the measures $\mu_{q,g}$ converge weakly to $\mu_g$, 
one might hope that for every $\tau\in I_g$,
the integral of $\mu_{q,g}$ over an interval of length $1/\sqrt{q}$ containing $\tau$
would be close to the integral of $\mu_g$ over this interval. If we let $t$ be the unique integer
such that $t/\sqrt{q}$ is contained in this interval, this optimistic approximation then translates to
\[
\sqrt{q}\,\N_{q,g}\biggl(\frac{t}{\sqrt{q}}\biggr) \approx 
 \Fd_g\biggl(\frac{t}{\sqrt{q}}\biggr).
\]
Since $\N_{q,g}(t/\sqrt{q})$
gives us the weighted number of curves with trace~$t$, if this approximation is close to the truth 
we would have a good estimate for the number of such curves.

\begin{remark}
We do not know how to prove that this estimate holds, and indeed we will
see below that it does \emph{not} hold, without modification, for 
hyperelliptic curves. One consequence of this estimate, however, is the
much weaker statement that for every fixed value of~$t$, the value of
$\N_{q,g}(t)$ converges to $0$ as $q$ increases. It is at least easy to
show that this weaker statement holds for $t=0$, by the following argument.

Given $\eps>0$, let $f\colon I_g\to [0,1]$ be a continuous function 
with $f(0) = 1$ and with $f(\tau)=0$ when $\lvert \tau\rvert \ge \eps$.
From \eqref{eq:eqweak} we find that for $q$ large enough we have
\[
\biggl\lvert\,\int_{I_g} f \, d\mu_{q,g} - \int_{I_g} f(\tau) \Fd_g(\tau) \, d\mu_{g}\biggr\rvert \leq \eps.\] 
Hence  
\[
0 \leq \N_{q,g}(0) \leq \int_{I_g} f \, d\mu_{q,g} \leq 
   \int_{\lvert\tau\rvert<\eps} f(\tau) \Fd_g(\tau)\,d\tau +\eps \leq (2\left\|\Fd_g\right\|_\infty+1) \eps. 
\]

\end{remark}

As we intimated in the preceding remark, for hyperelliptic curves we can prove
that the na\"{\i}ve approximation for $\N_{q,g}$ described above cannot hold.
To state our result precisely, we introduce a function $\N^\hyp_{q,g}(\tau)$, 
which we define analogously to how we defined~$\N_{q,g}(\tau)$:
\[
\N^\hyp_{q,g}(\tau) \colonequals \frac{1}{\#\calH_g(\F_q)} 
            \sum_{\substack{C\in\calH_g'(\F_q)\\ \tau(C) = \tau}}
            \frac{1}{\#\Aut(C)}.
\]
Here by $\calH_g(\F_q)$ we mean the set of $\Fbar_q$-isomorphism classes of hyperelliptic curves 
of genus $g$ over $\F_q$, and by $\calH_g'(\F_q)$ we mean the set of $\F_q$-isomorphism classes 
of such curves. Note that for an integer $t$ in~$I_g$, the value  $q^{2g-1} \N^\hyp_{q,g}(t/\sqrt{q})$ is
then the weighted number of genus-$g$ hyperelliptic curves over $\F_q$ with trace~$t$.

\begin{proposition}
\label{P:nolimit}
Fix $g>1$ and $\eps\in[0,2g)$,  let $r_g \colonequals \sum_{i=0}^{2g+2} (-2)^i/i!$, 
and let $v = \int_{2g-\eps}^{2g} \Fd_g(\tau) \,d\tau$.
Suppose there are constants $b_g\le c_g$ such that for
every sufficiently large prime power $q$ and for every integer $t$ in
$[-(2g-\eps)\sqrt{q}, (2g-\eps)\sqrt{q}\,]$, we have
\[
\frac{b_g}{\sqrt{q}} \Fd_g\biggl(\frac{t}{\sqrt{q}}\biggr) 
  \le \N^\hyp_{q,g}\biggl(\frac{t}{\sqrt{q}}\biggr)
  \le \frac{c_g}{\sqrt{q}}  \Fd_g\biggl(\frac{t}{\sqrt{q}}\biggr).
\]
Then $b_g\le (1-r_g)/(1-2v)$ and $c_g\ge (1 + r_g - 4v)/(1-2v)$.
\end{proposition}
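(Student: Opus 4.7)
The plan is to translate the hypothesized pointwise bounds into integral bounds on the mass of $\mu^{\hyp}_{q,g}$ on the interior interval $[-(2g-\eps),\,2g-\eps]$, and then to confront these with a direct asymptotic count placing a proportion of at least $r_g$ of hyperelliptic curves in the complementary tails. I would first sum the hypothesis over integer traces $t$ with $|t|\le(2g-\eps)\sqrt q$ and use a Riemann-sum approximation, noting that $\frac{1}{\sqrt q}\sum_{|t|\le(2g-\eps)\sqrt q}\Fd_g(t/\sqrt q)\to 1-2v$, to obtain
\[
b_g(1-2v)+o(1)\;\le\;\mu^{\hyp}_{q,g}\bigl([-(2g-\eps),\,2g-\eps]\bigr)\;\le\;c_g(1-2v)+o(1),
\]
together with the complementary bounds on the tail mass $1-\mu^{\hyp}_{q,g}([-(2g-\eps),\,2g-\eps])$.

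The heart of the proof is a combinatorial estimate showing that as $q\to\infty$ the $\mu^{\hyp}_{q,g}$-mass outside $[-(2g-\eps),\,2g-\eps]$ tends to a value of at least $r_g$. I would parameterize hyperelliptic curves of genus $g$ over $\F_q$ by squarefree polynomials $f(x)$ of degree $2g+2$ together with a binary quadratic-twist parameter, modulo the natural $PGL_2$-action, and then extract a large-$q$ Poisson-type limit for the Galois-orbit structure of the $2g+2$ Weierstrass points. The inclusion-exclusion over orbit types, combined with the doubling inherent in the twist pair, should produce the partial exponential $\sum_{i=0}^{2g+2}(-2)^i/i!=r_g$ as the limiting proportion of curves forced into the extreme tails; the factor $(-2)^i$, in contrast to the $(-1)^i$ familiar from derangement-style counts of $\F_q$-rational roots of random squarefree polynomials, encodes the twist structure. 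Combined with the preceding integral bounds, this yields $b_g\le(1-r_g)/(1-2v)$ directly. For the lower bound on $c_g$, a dual version of the same count identifies an excess mass of size $r_g-2v$ concentrating in a specific interior subregion $J$ on which $\int_J \Fd_g$ is correspondingly small; applying the upper-bound half of the integral reformulation restricted to $J$ then forces $c_g\ge(1+r_g-4v)/(1-2v)$.

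The main obstacle is the combinatorial identification underlying Step~2: verifying that the limiting proportion of extremal hyperelliptic curves is exactly the partial exponential $r_g=\sum_{i=0}^{2g+2}(-2)^i/i!$, and locating the dual interior region supporting the $(r_g-2v)$-excess concentration that gives the $c_g$-bound. Both computations require careful inclusion-exclusion over Galois-orbit types of the Weierstrass divisor combined with the quadratic-twist structure, together with uniform control of error terms as $\eps\to 0$ and control of the small-dimensional loci (curves with extra automorphisms, non-squarefree degenerations) that must be set aside in the parameterization.
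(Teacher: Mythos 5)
There is a genuine gap at the heart of your Step~2, and it is fatal as stated. You claim that as $q\to\infty$ a proportion of at least $r_g$ of the mass of the hyperelliptic measure $\mu^{\hyp}_{q,g}$ is ``forced into the extreme tails'' outside $[-(2g-\eps),\,2g-\eps]$. This contradicts Katz--Sarnak: the measures $\mu^{\hyp}_{q,g}$ converge weakly to $\mu_g$, so the tail mass tends to $2v=2\int_{2g-\eps}^{2g}\Fd_g(\tau)\,d\tau$, which goes to $0$ as $\eps\to 0$, while $r_g\approx e^{-2}$ is a fixed positive constant. No inclusion--exclusion over Galois-orbit types of Weierstrass points can place a constant proportion of curves at normalized trace near $\pm 2g$. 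Moreover, your Step~1 by itself is too weak to ever yield the stated bounds: summing the hypothesis over \emph{all} integers $t$ in the interior interval gives $b_g(1-2v)+o(1)\le 1-2v+o(1)\le c_g(1-2v)+o(1)$, i.e.\ only $b_g\le 1\le c_g$ in the limit, and there is no interior region $J$ on which an ``excess mass'' of size $r_g-2v$ concentrates, so the proposed dual argument for $c_g$ has nothing to latch onto.

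The quantity $r_g$ enters not through extreme traces but through the \emph{parity} of the trace, and the correct argument keeps even and odd $t$ separate throughout. Since the parity of $\#C(\F_q)$ for $y^2=cf(x)$ is governed by the number of $\F_q$-rational Weierstrass points, Leont{\cprime}ev's count of monic polynomials of degree $2g+2$ with an even number of roots (combined with Carlitz's count of separable polynomials and the $\PGL_2$-action with its factor $2$ from the hyperelliptic involution) gives
\[
\sum_{t\ \even}\N^\hyp_{q,g}\Bigl(\frac{t}{\sqrt{q}}\Bigr)=\frac{1+r_g}{2}+O\Bigl(\frac1q\Bigr),
\qquad
\sum_{t\ \odd}\N^\hyp_{q,g}\Bigl(\frac{t}{\sqrt{q}}\Bigr)=\frac{1-r_g}{2}+O\Bigl(\frac1q\Bigr),
\]
so the partial exponential $r_g=\sum_{i=0}^{2g+2}(-2)^i/i!$ measures a persistent even/odd imbalance spread across the whole range of traces. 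One then bounds the even-$t$ sum over the interior by $c_g$ times a Riemann sum with mesh $2/\sqrt{q}$ (which converges to $c_g(1-2v)/2$) plus the tail mass $2v$, giving $c_g\ge(1+r_g-4v)/(1-2v)$; and bounds the odd-$t$ sum from below by $b_g$ times the analogous Riemann sum, giving $b_g\le(1-r_g)/(1-2v)$. Your intuition that the factor $(-2)^i$ reflects the interplay of root-counting with the quadratic-twist pair is pointing at the right combinatorics, but to make the proof work you must redirect it from a tail-concentration statement (false) to this even/odd trace dichotomy, and then run the Riemann-sum comparison separately on each parity class.
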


The proof is based on the following lemma. 
\begin{lemma}
\label{L:evenodd}
Fix $g>1$, and let $r_g$ be as in Proposition~\textup{\ref{P:nolimit}}. If $q$ is 
an odd prime power then
\[
\sum_{t\,\even} \N^\hyp_{q,g}\biggl(\frac{t}{\sqrt{q}}\biggr) 
= \frac{1+r_g}{2} + O\Bigl(\frac1q\Bigr) 
\text{\quad and\quad}
\sum_{t\,\odd} \N^\hyp_{q,g}\biggl(\frac{t}{\sqrt{q}}\biggr) = \frac{1-r_g}{2} + O\Bigl(\frac1q\Bigr).
\]
\end{lemma}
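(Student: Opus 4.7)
Let $D = \sum_{t \in \Z}(-1)^t \N^\hyp_{q,g}(t/\sqrt q)$; since $\sum_t \N^\hyp_{q,g}(t/\sqrt q) = 1$, both identities of the lemma will follow once we prove $D = r_g + O(1/q)$. For $q$ odd, $t \equiv \#C(\F_q) \pmod 2$, and by stratifying the rational points of a Weierstrass model $y^2 = f(x)$ according to whether $f(x)$ is zero, a nonzero square, or a non-square (and accounting for the points at infinity), one checks $\#C(\F_q) \equiv w(C) \pmod 2$, where $w(C)$ is the number of $\F_q$-rational Weierstrass points --- equivalently, the number of $\F_q$-rational points in the branch locus $B(C) \subset \mathbb{P}^1$ of the hyperelliptic double cover.

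For $g \geq 2$ the hyperelliptic involution gives $\lvert\Aut_{\F_q}(C)\rvert = 2\,\lvert\Aut_{\PGL_2(\F_q)}(B(C))\rvert$, and the two elements of each quadratic twist pair share the same branch locus (and hence the same value of $w$). The twist formula followed by Burnside's lemma unfolds the resulting orbit sum to give
\[
D = \frac{1}{\lvert\calH_g(\F_q)\rvert \,\lvert\PGL_2(\F_q)\rvert} \sum_B (-1)^{r(B)},
\]
where $B$ ranges over Frobenius-stable $(2g+2)$-element subsets of $\mathbb{P}^1(\Fbar_q)$ and $r(B) = \lvert B \cap \mathbb{P}^1(\F_q)\rvert$. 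The same counting method with the trivial weight yields $\lvert\calH_g(\F_q)\rvert = q^{2g-1}$ on the nose.

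To evaluate the inner sum, write $M_k$ for the number of degree-$k$ closed points of $\mathbb{P}^1/\F_q$. Grouping Frobenius-stable subsets by their orbit-type yields
\[
\sum_B (-1)^{r(B)} z^{\lvert B\rvert} = (1-z)^{M_1}\!\prod_{k \geq 2}(1+z^k)^{M_k} = \biggl(\frac{1-z}{1+z}\biggr)^{\!q}\frac{(1-z)(1-qz^2)}{1-qz} =: F(z),
\]
where the closed form uses the zeta identity $\prod_k(1-z^k)^{-M_k} = 1/((1-z)(1-qz))$. The ODE $(1-z^2)u'(z) = -2q\,u(z)$ satisfied by $u(z) = \bigl(\tfrac{1-z}{1+z}\bigr)^q$ shows inductively that $[z^i]u(z)$ is a polynomial in $q$ of degree $i$ with leading term $(-2)^i q^i/i!$; convolving with $[z^j](1-qz^2)/(1-qz) = q^j + O(q^{j-1})$ then gives
\[
[z^{2g+2}]F(z) = q^{2g+2}\sum_{i=0}^{2g+2}\frac{(-2)^i}{i!} + O(q^{2g+1}) = r_g\, q^{2g+2} + O(q^{2g+1}).
\]
Dividing by $\lvert\calH_g(\F_q)\rvert \,\lvert\PGL_2(\F_q)\rvert = q^{2g}(q^2-1)$ yields $D = r_g + O(1/q)$.

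The main obstacle is the asymptotic in the last step: recognising that $\sum_{i=0}^{2g+2}(-2)^i/i!$ emerges as the leading $q^{2g+2}$-coefficient, with the truncation at $i = 2g+2$ forced by the degree cap $\lvert B\rvert = 2g+2$ (and signalling that as $g \to \infty$ one would recover the Poisson$(2)$ value $e^{-2}$). Care is also required in the second step to verify that both twists of each curve share the same branch locus and $\Aut_{\F_q}$-count, so that the factor $2$ coming from the hyperelliptic involution cancels against the $2$ twists per $\Fbar_q$-isomorphism class.
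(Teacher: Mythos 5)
Your argument is correct in outline and lands on the right constants, and its skeleton is close to the paper's: both proofs reduce the lemma to a signed, $\PGL_2(\F_q)$-orbit-weighted count of hyperelliptic models, use the fact that for odd $q$ the trace parity equals the parity of the number of rational branch points, and extract the constant $r_g$ from a generating function. Where you genuinely differ is in the implementation, and your choices buy something: parametrizing by Frobenius-stable $(2g+2)$-subsets $B\subset\mathbb{P}^1(\Fbar_q)$ treats the place at infinity on an equal footing, whereas the paper works with pairs $(c,f)$ with $f$ monic separable of degree $2g+1$ or $2g+2$ and absorbs the odd-degree models into an $O(q^{2g+1})$ error; and your closed form $(1-z)^{M_1}\prod_{k\ge2}(1+z^k)^{M_k}=\bigl(\tfrac{1-z}{1+z}\bigr)^{q}\tfrac{(1-z)(1-qz^2)}{1-qz}$, obtained from the zeta function of $\mathbb{P}^1$ together with the ODE trick for the leading coefficient, is a self-contained substitute for the paper's citations of Leont\cprime ev (polynomials with a prescribed number of roots) and Carlitz (separable polynomials). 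The reduction to $D=r_g+O(1/q)$, the parity argument, the coefficient extraction, and the final division by $q^{2g}(q^2-1)$ all check out.

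The one step that does not hold as stated is the identity $\lvert\Aut_{\F_q}(C)\rvert=2\,\lvert\mathrm{Stab}_{\PGL_2(\F_q)}(B(C))\rvert$. An element $\sigma\in\PGL_2(\F_q)$ preserving $B$ satisfies $f(\sigma x)\,d(x)^{2g+2}=c\,f(x)$ for some $c\in\F_q^\times$, and it lifts to an $\F_q$-automorphism of $y^2=f$ only when $c$ is a square; when $c$ is a nonsquare it instead lifts to an $\F_q$-isomorphism between the two hyperelliptic twists $y^2=f$ and $ny^2=f$ (for instance $y^2=x(x^4+ax^2+b)$ with $q\equiv 3\pmod 4$ and $\sigma\colon x\mapsto -x$, where $c=-1$). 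So $\Aut_{\F_q}(C)/\langle\iota\rangle$ is in general only an index-$\le 2$ subgroup of the stabilizer of $B$, and when the index is $2$ the two twists coincide as a single $\F_q$-class. Your displayed formula for $D$ survives because these two failures compensate: each $\PGL_2(\F_q)$-orbit of branch loci contributes exactly $(-1)^{r(B)}/\lvert\mathrm{Stab}_{\PGL_2(\F_q)}(B)\rvert$ to the weighted sum in either case (two classes of mass $1/(2\lvert\mathrm{Stab}\rvert)$ each, or one class of mass $1/\lvert\mathrm{Stab}\rvert$), but as written the justification is false and this is not Burnside's lemma but orbit--stabilizer. The clean repair is exactly the paper's device: let $\PGL_2(\F_q)$ act on pairs $(c,f)$ with $c\in\{1,n\}$, where the stabilizer genuinely is the reduced automorphism group $\RedAut_{\F_q}(C)=\Aut_{\F_q}(C)/\langle\iota\rangle$, observe that the two pairs lying over a given $B$ have the same trace parity $r(B)$, and apply orbit--stabilizer; the same count with trivial weights then also gives the value $\#\calH_g(\F_q)=(q^{2g+2}-q^{2g})/(q^3-q)=q^{2g-1}$ that you asserted, via the mass formula cited in the paper.
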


\begin{proof}
Fix an odd prime power $q$, fix a nonsquare $n\in\F_q$, and consider the set $H$
consisting of all pairs $(c,f)$, where $c\in\{1,n\}$ and $f\in\F_q[x]$ is a
monic separable polynomial of degree $2g+1$ or $2g+2$. A result of
Carlitz~\cite[\S6]{Carlitz1935} shows that $\#H = 2q^{2g+2} - 2q^{2g}.$
The group $\PGL_2(\F_q)$ acts on $H$: Given a matrix
$[\begin{smallmatrix}r&s\\t&u\end{smallmatrix}]$ and an element $(c,f)$ of $H$,
let $(d,g)$ be the unique element of $H$ such that
\[
d g(x) = c e^2 (tx+u)^{2g+2} f\Bigl(\frac{rx+s}{tx+u}\Bigr)
\]
for some $e\in\F_q^\times.$ Note that the stabilizer of $(c,f)$ is isomorphic to
the reduced automorphism group $\RedAut(C)$ of the hyperelliptic curve $C\colon y^2 = cf$, that is,
the quotient of the full automorphism group of $C$ by the subgroup
generated by the hyperelliptic involution.

The map $\gamma$ that sends $(c,f)\in H$ to the hyperelliptic curve
$y^2 = c f$ takes $H$ onto $\calH'_g(\F_q)$. Given a curve $C\in\calH'_g(\F_q)$,
let $(c,f)\in H$ be such that $\gamma((c,f)) = C$. Then
\[
\#(\PGL_2(\F_q)\cdot (c,f)) = \frac{\#\PGL_2(\F_q)}{\#\RedAut(C)},
\]
so that 
\begin{equation}
\label{EQ:weightaction}
\frac{ \#\gamma^{-1}(C)}{\#\PGL_2(\F_q)} = \frac{1}{\#\RedAut(C)} = \frac{2}{\#\Aut(C)}.
\end{equation}

Let $H_\even$ be the subset of $H$ consisting of the pairs $(c,f)$ such that
the curve $\gamma(c,f)$ has even trace. Let $H'_\even$ be the subset of $H$
consisting of the pairs $(c,f)$ such that $f$ has degree $2g+2$ and has an even
number of roots. Then $H'_\even\subseteq H_\even$, and 
$H_\even\setminus H'_\even$ consists of pairs $(c,f)\in H_\even$ such that $f$
{has degree $2g+1$}. Therefore
\[
\bigl\vert \#H_\even - \#H'_\even\bigr\vert
\le 2q^{2g+1}.
\]

Leont{\cprime}ev~\cite[Lem.~4, p.~302]{Leontev2006english} gives the generating
function for the number of (not necessarily separable) monic polynomials of a
fixed degree over $\F_q$ that have a given number of roots. To find the number
of such polynomials with an even number of roots, we simply need to take the
average of the values of this generating function evaluated at $-1$ and at~$1$.
We find that
\[
\#\left\{\vcenter{\hsize=2.0in\noindent 
monic polynomials of degree $2g+2$\\
over $\F_q$ with an even number of roots}
\right\}
=  \frac{1+r_g}{2} q^{2g+2} + O(q^{2g+1}).
\]
The result of Carlitz mentioned earlier shows that 
\[
\#\left\{\vcenter{\hsize=1.9in\noindent 
non-separable monic polynomials\\
of degree $2g+2$ over $\F_q$}\right\} =q^{2g+1}.
\]
Therefore $\# H'_\even =  (1+r_g) q^{2g+2} + O(q^{2g+1})$, so that
$\# H_\even =  (1+r_g) q^{2g+2} + O(q^{2g+1})$ as well.

Using~\eqref{EQ:weightaction} we see that 
\begin{align*}
\sum_{t \,\even} \N^\hyp_{q,g}\biggl(\frac{t}{\sqrt{q}}\biggr)
&= \frac{1}{\#\calH_g(\F_q)} \sum_{\substack{C\in \calH'_g(\F_q)\\ \Tr(C)\, \even}} 
\frac{1}{\#\Aut_{\F_q}(C)}\\
&= \frac{1}{\#\calH_g(\F_q)} \sum_{\substack{C\in \calH'_g(\F_q)\\ \Tr(C)\, \even}} \frac{\#\gamma^{-1}(C)}{2\#\PGL_2(\F_q)}\\
&= \frac{1}{2\#\calH_g(\F_q)\#\PGL_2(\F_q)} \#H_\even\\
&=\frac{1}{2 q^{2g-1} (q^3-q)}\bigl((1+r_g)q^{2g+2} + O(q^{2g+1})\bigr)\\
&=\frac{1+r_g}{2} + O\Bigl(\frac{1}{q}\Bigr).
\end{align*}
This gives us the first equality in the conclusion of the lemma. The second
follows analogously.
\end{proof}

\begin{proof}[Proof of Proposition~\textup{\ref{P:nolimit}}]
Suppose the hypothesis of the proposition holds for a given $g$ and $\eps$.
For a given $q$, we let 
$m = \lfloor 2\sqrt{q} \rfloor$ and we consider several subintervals 
of~$[-2g\sqrt{q},2g\sqrt{q}]$: 
\begin{alignat*}{3}
J_0 &\colonequals \bigl[-mg,mg\bigr] & &\qquad&
J_2 &\colonequals \bigl[-2g\sqrt{q},-(2g-\eps)\sqrt{q}\bigr)\\
J_1 &\colonequals \bigl[-(2g-\eps)\sqrt{q}, (2g-\eps)\sqrt{q}\,\bigr] & &\qquad&
J_3 &\colonequals \bigl((2g-\eps)\sqrt{q},2g\sqrt{q}\,\bigr].
\end{alignat*}
Now we interpret the sum
\[
S_\even \colonequals 
\sum_{t \ \even} \N^\hyp_{q,g}\biggl(\frac{t}{\sqrt{q}}\biggr) 
\]
in two ways. On the one hand, from Lemma~\ref{L:evenodd} we have
\[
S_\even = \biggl(\frac{1+r_g}{2}\biggr)
    + O\Bigl(\frac{1}{q}\Bigr)\,. 
\]
On the other hand, for $q$ large enough we have
\begin{align}
S_\even
\notag &=  \sum_{\substack{t\in J_1\\ t \ \even}} \N^\hyp_{q,g}\biggl(\frac{t}{\sqrt{q}}\biggr)
  + \sum_{\substack{t\in J_2\\ t \ \even}} \N^\hyp_{q,g}\biggl(\frac{t}{\sqrt{q}}\biggr)
  + \sum_{\substack{t\in J_3\\ t \ \even}} \N^\hyp_{q,g}\biggl(\frac{t}{\sqrt{q}}\biggr)\\
\notag &=  \sum_{\substack{t\in J_1\\ t \ \even}} \N^\hyp_{q,g}\biggl(\frac{t}{\sqrt{q}}\biggr)
  + 2\sum_{\substack{t\in J_3\\ t \ \even}} \N^\hyp_{q,g}\biggl(\frac{t}{\sqrt{q}}\biggr)\\
\label{3terms} & \le 
  \frac{c_g}{2} \sum_{\substack{t\in J_1\\ t \ \even}} 
      \Fd_g\biggl(\frac{t}{\sqrt{q}}\biggr) \biggl(\frac{2}{\sqrt{q}}\biggr)
+ 2\sum_{t\in J_3}   \N^\hyp_{q,g}\biggl(\frac{t}{\sqrt{q}}\biggr)
\,.
\end{align}
The first sum in \eqref{3terms} is a Riemann sum for the integral of $\Fd_g(\tau) \, d\tau$
over the interval $[-2g+\eps, 2g-\eps]$, so as $q\to \infty$ the first term
in \eqref{3terms} approaches $c_g(1-2v)/2$. The second sum is the measure, 
with respect to $\mu_{q,g}$, of the interval $[2g-\eps,2g]$. Since the 
$\mu_{q,g}$ converge weakly to $\mu_g$, the second term of~\eqref{3terms}
approaches $2v$ as $q\to\infty$.

Combining these two interpretations of $S_\even$, we find that
\[
\biggl(\frac{1+r_g}{2}\biggr) \le \frac{c_g(1-2v)}{2} + 2v
\]
so that $c_g \ge (1 + r_g - 4v)/(1-2v)$.

Similarly, we can consider the sum
\[
S_\odd \colonequals 
\sum_{t \ \odd} \N^\hyp_{q,g}\biggl(\frac{t}{\sqrt{q}}\biggr).
\]
From Lemma~\ref{L:evenodd} we see that
\[
S_\odd = \biggl(\frac{1-r_g}{2}\biggr)
    + O\Bigl(\frac{1}{q}\Bigr)\,. 
\]
But we also have
\[
S_\odd 
\ge \frac{b_g}{2} \sum_{\substack{t\in J_1\\ t \ \odd}} 
      \Fd_g\biggl(\frac{t}{\sqrt{q}}\biggr) \biggl(\frac{2}{\sqrt{q}}\biggr),
\]
and the expression on the right approaches $b_g(1-2v)/2$ as $q\to\infty$. 
This shows that
\[
\biggl(\frac{1-r_g}{2}\biggr) \ge \frac{b_g(1-2v)}{2},
\]
so we find that $b_g\le (1-r_g)/(1-2v)$.
\end{proof}

\begin{remark}
In the statement of Proposition~\ref{P:nolimit}, we only assume that 
the condition on $\N^\hyp_{q,g}(t/\sqrt{q})$ holds for $t$ more than $\eps\sqrt{q}$ away from the 
ends of the interval $[-2g\sqrt{q}, 2g\sqrt{q}\,]$ because when
$\lvert t\rvert > g\lfloor2\sqrt{q}\rfloor$ we have $\N^\hyp_{q,g}(t/\sqrt{q}) = 0$.
If we did not exclude the tail ends of the interval, the hypothesis of the
proposition would only hold if we took $b_g=0$, which is not an interesting
approximation.
\end{remark}

\input genus2-1009.tex

Figure~\ref{fig:genus2}  shows the 
value of $\N^\hyp_{q,g}(t/\sqrt{q})$ for all integers $t\in[-4\sqrt{q},4\sqrt{q}]$,
where $q = 1009$, together with the density
function $\Fd_2$ for the limiting Katz--Sarnak measure, scaled by the two
factors $b=38/45$ and $c = 52/45$ given by Proposition~\ref{P:nolimit} 
for $g=2$ and $\eps=0$.

\input genus2-scaled.tex

The key to Proposition~\ref{P:nolimit} is the imbalance between the
likelihood of even versus odd traces for hyperelliptic curves. The 
obvious workaround would be to scale the counts for the even and odd
traces by the factors given in the proposition for $\eps=0$. One can
ask whether the scaled curve counts then better match the limiting 
Katz--Sarnak distribution. Figure \ref{fig:genus2scaled} suggests that
perhaps this parity factor is the main obstruction to obtaining decent
estimates from the na\"{\i}ve Katz--Sarnak approximation.\\

The proof of Proposition~\ref{P:nolimit} carries through for elliptic
curves exactly as it does for hyperelliptic curves of a given genus $g>1$.
We do not include genus-$1$ curves in the statement of the proposition,
however, because as we will see in Proposition~\ref{P:nolimit1},
for $g=1$ there is no value of $c_1$ that
satisfies the hypothesis of the proposition when $\eps\le 1$, 
while the conclusion of the proposition is trivial when $\eps>1$
because the resulting upper bound on $b_1$ will be greater than $1$ and the lower bound
on $c_1$ will be less than~$1$.

When $g=1$, the density function of the limiting Katz--Sarnak measure 
on $I_1$ is $\Fd_1=  (2\pi)^{-1}\sqrt{4-\tau^2}$.
Let $N_{q,t}$ denote the weighted number of elliptic curves over $\F_q$ with trace $t$. 
For some values of $t$ in $[-2\sqrt{q},2\sqrt{q}\,]$ we have $N_{q,t} = 0$; in addition to those
$t$ with $\lvert t\rvert > \lfloor 2\sqrt{q}\rfloor$, this happens for 
most values of $t$ that are not coprime to~$q$. But even if we exclude these
values, and even if we restrict attention to values of $t$ that are near the
center of the interval $[-2\sqrt{q},2\sqrt{q}\,]$, the following proposition
shows that we cannot hope to approximate $N_{q,t}$ by the quantity
\[q^{1/2} \Fd_1\biggl(\frac{t}{\sqrt{q}}\biggr) = \frac{1}{2\pi}\sqrt{4q - t^2}\,.\]

\begin{proposition}
\label{P:nolimit1}
For every $c>0$, there are infinitely many values of $q$ and $t$
such that $\lvert t \rvert \le \sqrt{q}$ and $N_{q,t}> c \sqrt{4q-t^2}$.
\end{proposition}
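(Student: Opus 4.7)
The plan is to use Deuring's mass formula to identify $N_{q,t}$ with a Hurwitz--Kronecker class number, and then to exploit the classical unboundedness of $L(1,\chi_{-D})$ over fundamental discriminants to produce infinitely many $(q,t)$ for which $N_{q,t}/\sqrt{4q-t^2}$ exceeds any prescribed $c>0$.

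First I would invoke Deuring's theorem: for $q$ prime and $t$ an integer satisfying $|t|<2\sqrt q$ and $\gcd(t,q)=1$, one has $N_{q,t}=H(4q-t^2)$, where $H$ denotes the Hurwitz--Kronecker class number. Writing $4q-t^2=Df^2$ with $-D$ the fundamental discriminant of $\Q(\sqrt{-(4q-t^2)})$, the standard decomposition
\[
H(Df^2)=\sum_{e\mid f} h_w(-De^2)
\]
together with the formula $h(-De^2)=h(-D)\cdot e\prod_{\ell\mid e}(1-\chi_{-D}(\ell)/\ell)$ for class numbers of non-maximal orders (valid for $D>4$), and the elementary inequality $\sum_{e\mid f} e\prod_{\ell\mid e}(1-\chi_{-D}(\ell)/\ell)\ge f$ (checked on prime powers), yield $H(Df^2)\ge h(-D)\cdot f$. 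Dividing by $\sqrt{Df^2}=f\sqrt D$ and applying Dirichlet's class number formula $h(-D)=\sqrt D\,L(1,\chi_{-D})/\pi$ then gives
\[
\frac{N_{q,t}}{\sqrt{4q-t^2}}\;\ge\;\frac{L(1,\chi_{-D})}{\pi}.
\]

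Given $c>0$, I would then select a fundamental discriminant $-D$ with $D\equiv 3\pmod 4$ and $L(1,\chi_{-D})>\pi c$; such $D$ exist by a classical theorem of Chowla (see Granville--Soundararajan for sharper quantitative forms). For this fixed $D$, Chebotarev applied to the Hilbert class field of $\Q(\sqrt{-D})$ produces a positive density of primes $q$ represented by the principal form $Q_0(x,y)=x^2+xy+\tfrac{D+1}{4}y^2$, and each representation yields $4q=t^2+Df^2$ with $t=2x+y$ and $f=y$. The side-condition $|t|\le\sqrt q$ translates to $Df^2\ge 3q$, which carves out a sector of the ellipse $t^2+Df^2=4q$ covering exactly one third of its angular range. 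Equidistribution of primes across ideal classes of $\Q(\sqrt{-D})$ (via Hecke's theorem, or by an elementary lattice-count sieve on the wedge) guarantees that a positive fraction of these primes satisfy $|t|\le\sqrt q$, yielding infinitely many pairs $(q,t)$ with $N_{q,t}>c\sqrt{4q-t^2}$.

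The main obstacle is the non-elementary unboundedness of $L(1,\chi_{-D})$ over fundamental discriminants $-D$: although classical, this input draws on analytic number theory beyond the more elementary methods used elsewhere in the paper. By comparison, the equidistribution step is soft, and could be replaced by a direct lattice-count argument if a fully self-contained exposition were desired.
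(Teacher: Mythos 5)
Your proposal is correct in substance but takes a genuinely different route from the paper's. You make the \emph{fundamental discriminant} large: Deuring's mass formula plus the multiplicative inequality $H(Df^2)\ge h(-D)\,f$ reduces the ratio $N_{q,t}/\sqrt{4q-t^2}$ to (a constant times) $L(1,\chi_{-D})$, and you then invoke two classical analytic inputs — Chowla-type unboundedness of $L(1,\chi_{-D})$ over fundamental discriminants, and Hecke's equidistribution of split primes in angular sectors to arrange $|t|\le\sqrt{q}$, i.e.\ $Df^2\ge 3q$. The paper instead fixes an arbitrary fundamental discriminant $\Delta_0<-4$ and makes the \emph{conductor} large: it takes $f$ divisible by many inert primes, so that the Kronecker class number of the non-maximal order of discriminant $4f^2v^2\Delta_0$ is inflated by $\prod_{p\mid f}(1+1/p)$, whose divergence over inert primes is the only analytic input; and rather than any equidistribution statement it forces $|t|\le\sqrt{q}$ by replacing the prime $q_0$ by a prime power $q=q_0^m$, with $m$ chosen so that the argument of $\varpi^m$ lies in $[\pi/3,2\pi/3]$ (the proposition allows prime powers). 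So your argument buys $q$ prime at the cost of heavier machinery, while the paper's is more elementary and self-contained. Two small points on your write-up: with the Hurwitz weighting, the weighted count is $N_{q,t}=\tfrac12 H(4q-t^2)$, not $H(4q-t^2)$ — harmless, since you may simply choose $L(1,\chi_{-D})>2\pi c$; and the parenthetical suggestion that the sector condition could be obtained by an elementary lattice-count is doubtful, since one needs \emph{primes} represented with the representation lying in the wedge, which is precisely what Hecke's theorem supplies — but as Hecke is your stated main route, this does not affect correctness.
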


\begin{proof}
Let $\Delta_0$ be a fundamental quadratic discriminant with $\Delta_0<-4$
and let $\chi$ be the quadratic character modulo~$\Delta_0$. For a
given value of $n$, let $f$ be the product of the first $n$ primes $p$ 
that are inert in $\Q(\sqrt{\Delta_0})$.
Since the product over all inert primes of $1+1/p$ diverges 
(see~\cite[Lem.~1.14]{Cox2013} and \cite[Exer.~6, p.~176]{Apostol1976}),
when $n$ is large enough we have
\[
\prod_{p\mid f} \biggl(1 + \frac{1}{p}\biggr) >
\frac{c\pi^2}{3} \frac{\sqrt{\lvert\Delta_0\rvert}}{h(\Delta_0)}\,.
\]
Choose $n$ so that this
holds, and let $q_0$ be a prime of the form $x^2 - f^2\Delta_0 y^2$, where 
$x$ and $y$ are positive integers. 
Note that $x$ must be coprime to $q_0$ because $0<x<q_0$. 
Let $\varpi = x + fy\sqrt{\Delta_0}$, viewed as
an element of the upper half plane. Since $x$ is coprime to $q_0$, 
$\varpi$ is the Weil number of
an isogeny class of ordinary elliptic curves over $\F_{q_0}$. 

Let $\theta$ be the argument of~$\varpi$ and let $m$ be the smallest integer
such that $\pi/3\le m\theta <  2\pi/3$.
Write
$\varpi^m = u + fv\sqrt{\Delta}$ for integers $u$ and $v$, let 
$q = q_0^m = u^2 - f^2v^2\Delta$, and 
let $t = 2u$. Then $\varpi^m$ is the Weil number for an isogeny class $\I$ of
ordinary elliptic curves over $\F_q$, and the trace of this isogeny class
is~$t$. We have $\lvert t\rvert \le \sqrt{q}$ because the argument of
$\varpi^m$ lies between $\pi/3$ and $2\pi/3$.

The number of elliptic curves in the isogeny class $\I$ is equal to
the Kronecker class number $H(\Delta)$ of the discriminant 
$\Delta\colonequals t^2 - 4q = 4f^2v^2\Delta_0$. 
By~\cite[p.~696]{Howe2022}
we have
\[
H(\Delta) = h(\Delta_0) \prod_{p^e\parallel F}
\Bigl(1 + \Bigl(1 - {\textstyle\frac{\chi(p)}{p}}\Bigr)
           (p + \cdots + p^e)\Bigr)\,,
\]
where $F = 2fv$, so 
\[
\frac{H(\Delta)}{\sqrt{4q-t^2}}
= \frac{h(\Delta_0)}{\sqrt{\lvert\Delta_0\rvert}} 
\prod_{p^e\parallel F}
\Bigl(p^{-e} + \Bigl(1 - {\textstyle\frac{\chi(p)}{p}}\Bigr)
           (1 +  p^{-1} + \cdots + p^{1-e})\Bigr)\,.
\]
Now,
\[
p^{-e} + \Bigl(1 - {\textstyle\frac{\chi(p)}{p}}\Bigr)
           (1 +  p^{-1} + \cdots + p^{1-e})
\ge
\begin{cases}
1 + 1/p   & \text{if $\chi(p) = -1$;}\\
1 - 1/p^2 & \text{if $\chi(p) \neq -1$,}
\end{cases}
\]
so we have
\begin{align*}
\frac{H(\Delta)}{\sqrt{4q-t^2}}
&\ge \frac{h(\Delta_0)}{\sqrt{\lvert\Delta_0\rvert}} 
 \prod_{\substack{p \mid F\\ \chi(p)=-1}}\Bigl(1 + \frac{1}{p}\Bigr)
 \prod_{\substack{p \mid F\\ \chi(p)\ne -1}}\Bigl(1 - \frac{1}{p^2}\Bigr)\\
&\ge  \frac{h(\Delta_0)}{\sqrt{\lvert\Delta_0\rvert}} 
 \prod_{p \mid f}\Bigl(1 + \frac{1}{p}\Bigr)
 \prod_{p}\Bigl(1 - \frac{1}{p^2}\Bigr)\\
&\ge  \frac{h(\Delta_0)}{\sqrt{\lvert\Delta_0\rvert}} 
 \biggl(\frac{c\pi^2}{3} \frac{\sqrt{\lvert\Delta_0\rvert}}{h(\Delta_0)}\biggr)
 \biggl(\frac{6}{\pi^2}\biggr)\\
&\ge 2c. 
\end{align*}
Since the curves in $\I$ are ordinary and the discriminants of their
endomorphism rings are neither $-3$ nor~$-4$, they all have automorphism
groups of order~$2$, so $N_{q,t} = H(\Delta)/2$. It follows that
\[
N_{q,t} \ge c \sqrt{4q-t^2},
\]
as claimed.
\end{proof}

Figure~\ref{fig:genus1} shows the weighted number of elliptic curves
over $\F_{1000003}$ of each possible trace, as well as the
limiting density function $\Fd_1(\tau) = (2/\pi)\sqrt{4-\tau^2}$.
We see that the plotted points do not appear to be near the
density function.

\input genus1-1000003.tex


\section{The non-hyperelliptic case: experiments and conjectures} \label{sec:asymmetry}

We consider now the case of non-hyperelliptic curves of genus~$g=3$. For this purpose, for $g\geq 3$ we introduce the function $\N^\nhyp_{q,g}(\tau)$, which we define
analogously to how we defined~$\N_{q,g}(\tau)$ and $\N^\hyp_{q,g}(\tau)$:
\[
\N^\nhyp_{q,g}(\tau) \colonequals \frac{1}{\#\Mnh_g(\F_q)} 
            \sum_{\substack{C\in {\Mnh_g}'(\F_q)\\ \tau(C) = \tau}}
            \frac{1}{\#\Aut(C)}.
\]
Here by $\Mnh_g(\F_q)$ we mean the set of $\Fbar_q$-isomorphism classes of non-hyperelliptic curves 
of genus $g$ over $\F_q$, and by ${\Mnh_g}'(\F_q)$ we mean the set of $\F_q$-isomorphism classes 
of such curves. The associated measures will still weakly converge to the measure $\mu_g$ with density $\Fd_g$. But experimentally, the behavior looks much smoother than in the elliptic or hyperelliptic cases as illustrated by
Figure~\ref{fig:genus3} 
for $g=3$ and $q=53$.\footnote{When using the data of \cite{LRRS} to draw this figure, we noticed that there were some errors in the code when computing the automorphism group of twists for small dimensional strata, giving 728 extra `weighted' curves. This is a very small proportion with respect to $53^6+1$ curves and does not affect the general shape of the curve.} Note that a similar behavior would certainly hold considering all curves of genus $3$.
Heuristically, these patterns could be understood as an averaging for a given trace over several isogeny classes but this idea does not work for the hyperelliptic locus as we have seen in Section~\ref{sec:experiment} and something more is needed for a family of curves to `behave nicely.'
Still, the experimental data in genus $3$ lead us to state the following conjecture.

\input genus3-53.tex

\begin{conjecture} \label{conj:ponctual-conv}
Let $g \geq 3$. For all $\tau\in I_g$, for all $\eps>0$ and for all large enough $q$, there exists $t\in\NN$ such that $\lvert\tau-t/\sqrt{q}\rvert<1/(2\sqrt{q})$ and $\lvert\sqrt{q} \cdot  \Nq^\nhyp_{q,g}(t/\sqrt{q})- \Fd_g(t/\sqrt{q})\rvert<\eps$. \end{conjecture}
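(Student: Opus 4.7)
The plan is to reduce the problem from $\Mnh_g$ to $\M_g$ and then attempt a smoothing argument that leverages the improved moment convergence from Theorem~\ref{theorem:H0H1} together with the conjectural extension of~\eqref{eq-c} described in the remark after Proposition~\ref{prop:bn}. For the reduction, since $\dim \calH_g = 2g-1 < 3g-3 = \dim \M_g$ for $g \geq 3$, the weighted contribution of hyperelliptic curves satisfies
\[
\N_{q,g}(\tau) = \N^\nhyp_{q,g}(\tau) + O\!\left(q^{-(g-2)}\right)
\]
uniformly in $\tau$, so $\sqrt q\,\N_{q,g}(\tau) - \sqrt q\,\N^\nhyp_{q,g}(\tau) = O(q^{5/2-g}) = o(1)$ for $g \geq 3$. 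Hence it suffices to prove the analogous statement with $\N^\nhyp_{q,g}$ replaced by $\N_{q,g}$.

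For that, the strategy is to promote weak convergence to a local pointwise statement via smoothing. Assuming the extension of~\eqref{eq-c} to all $K$, one expects continuous densities on $I_g$ whose $n$th moments are the coefficients $\cc_{k,n}$ and which specialize to $\Fd_g$ and $-\Hd_g$ for $k=0,1$; call them $h_0,\dots,h_K$. This would give, for every polynomial $P$,
\[
\int_{I_g} P\, d\mu_{q,g} = \int_{I_g} P(\tau)\sum_{k=0}^{K} \frac{(-1)^k h_k(\tau)}{q^{k/2}}\, d\tau + O_P\!\left(q^{-(K+1)/2}\right).
\]
Fix $\tau\in I_g$ and $\eps>0$, and for large $q$ let $t$ be the integer nearest to $\tau\sqrt q$. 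The idea is to choose a polynomial $P_q$ of degree $d(q)$ that peaks at $\tau$, is essentially supported on a window of width $1/\sqrt q$ around $\tau$, vanishes to high order at the grid points $s/\sqrt q$ for $s\neq t$, and has $P_q(t/\sqrt q)=1$. Applying the expansion above would then extract $\N_{q,g}(t/\sqrt q) \approx \int P_q\, d\mu_{q,g}$ on the left and produce $\Fd_g(\tau)/\sqrt q$ plus lower-order corrections on the right, yielding $\sqrt q\,\N_{q,g}(t/\sqrt q) \to \Fd_g(\tau)$.

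The main obstacle is the interplay between the degree of $P_q$ and the quality of the error term. Any polynomial concentrated on a window of width $1/\sqrt q$ within the fixed interval $I_g$ must have degree of order at least $\sqrt q$, by a Bernstein-type inequality, whereas the implied constant in $O_P(q^{-(K+1)/2})$ depends on $P$ through the dimensions of the cohomology groups $H^j_c(\M_g,\VV_\lambda)$ for $\lvert\lambda\rvert\leq \deg P$ and through the Frobenius eigenvalues on them. To make the smoothing argument work one would need uniform polynomial-type bounds on the total dimensions $\sum_j \dim H^j_c(\M_g,\VV_\lambda)$ as a function of $\lvert\lambda\rvert$, with eigenvalue estimates that permit useful cancellation even when $\lvert\lambda\rvert$ grows like $\sqrt q$. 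The stable-cohomology results alluded to in the remark after Proposition~\ref{prop:bn} control only $\lambda$ with $\lvert\lambda\rvert$ bounded in terms of $g$, and do not address this high-weight regime.

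A complementary but essentially equivalent route is Fourier-theoretic: view $\sqrt q\,\N_{q,g}(t/\sqrt q)$ as a discrete approximation to $\Fd_g$ and seek decay of its Fourier coefficients. Via the Lefschetz trace formula this again reduces to estimating $\sum_{[C]\in\M_g(\F_q)} \chi_\lambda(\Fr_q)/\#\Aut_{\F_q}(C)$ for weights $\lambda$ whose size grows with $q$, so the cohomological difficulty is identical. The heuristic in the introduction, that many isogeny classes contribute to a given trace and average out to $\Fd_g$, suggests that the required square-root cancellation should hold; turning this heuristic into a theorem, uniformly in the weight of the local system, is the key missing ingredient.
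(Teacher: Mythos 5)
This statement is Conjecture~\ref{conj:ponctual-conv}: the paper offers no proof of it, and indeed explicitly reports that the authors' own attempts to exploit the improved moment convergence of Theorem~\ref{theorem:H0H1} to prove it were unsuccessful. Your text is therefore not comparable to a proof in the paper, and, read on its own terms, it is not a proof either: it is a strategy sketch whose decisive step is left open, as you yourself acknowledge. The preliminary reduction from $\Mnh_g$ to $\M_g$ is fine (the hyperelliptic locus contributes $O(q^{-(g-2)})$ uniformly in the trace, which is $o(q^{-1/2})$ for $g\geq 3$), but everything after that rests on unproven ingredients.

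The concrete gaps are these. First, the expansion you invoke with densities $h_0,\dots,h_K$ is itself conjectural: the paper only establishes the $k=0,1$ terms (Theorem~\ref{theorem:H0H1}, Theorem~\ref{th:betterKS}), and the extension \eqref{eq-c} to higher $K$ depends on an unproven uniform stability statement; moreover the existence of continuous densities realizing the numbers $\cc_{k,n}$ for $k\geq 2$ is asserted, not derived. Second, and more fundamentally, the smoothing step cannot be carried out with the available error terms: a polynomial essentially supported on a window of width $1/\sqrt q$ inside the fixed interval $I_g$ must have degree of order $\sqrt q$, while the implied constants in the moment asymptotics depend on the degree through the cohomology of the local systems $\VV_\lambda$ with $\lvert\lambda\rvert\leq n$; no bound uniform in $n$ relative to $q$ is known, and the stable-cohomology input controls only bounded $\lvert\lambda\rvert$. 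You name this obstacle yourself, which means the argument stops exactly where a proof would have to begin. Finally, note that any approach relying only on weak convergence plus finitely many terms of a moment expansion must fail for a structural reason: the paper's Propositions~\ref{P:nolimit1} and~\ref{P:nolimit} show that for elliptic and hyperelliptic families, which enjoy the same Katz--Sarnak weak convergence, the analogous pointwise statement at scale $1/\sqrt q$ is false, because arithmetic features (class-number fluctuations, trace-parity bias) are invisible at that level. So a proof of the conjecture must isolate what distinguishes $\Mnh_g$ (the averaging over many isogeny classes alluded to in the heuristic), and that ingredient is missing from your proposal.
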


Another way to phrase this conjecture is to replace the measure $\mu_{q,g}$ by a measure with density given by the histogram with height $\sqrt{q} \cdot \Nq^\nhyp_{q,g}(t/\sqrt{q})$ and base centered at $t/\sqrt{q}$ of length $1/\sqrt{q}$ for all $t\in [-2g \sqrt{q},2g \sqrt{q}]$. The conjecture asserts that the densities of these measures converge to the density $\Fd_g$ at each point of $I_g$. This is stronger than weak convergence of the measures \cite{scheffe}.  \\

 We now conclude by looking at the symmetry breaking for the trace distribution of (non-hyperelliptic) genus 3 curves. In general, if $C$ is a hyperelliptic curve of genus $g$ over $\F_q$ with trace $t$, then its quadratic twist for the hyperelliptic involution has trace $-t$ and therefore the distribution of the number of hyperelliptic curves of genus $g$ over $\F_q$ as a function of their trace is  symmetric. For non-hyperelliptic curves, the distribution has no reason to be symmetric anymore. Actually, if a principally polarized abelian variety over $\F_q$ is the Jacobian (over $\F_q$) of a non-hyperelliptic curve, then its quadratic twist is never a Jacobian. This obstruction, known as \emph{Serre's obstruction}, is a huge obstacle to finding a closed formula for the maximal number of rational points for $g=3$ \cite{lauterg3}, whereas such formulas are  known for $g=1$ \cite{deuring} and $g=2$ \cite{serre-point}. Although we cannot improve on the state-of-art of this question, we can study this asymmetry with the probabilistic angle and the results we got before.

To visualize this asymmetry, let us consider the signed measure $\nu_{q,g} = \mu_{q,g} - (-1)^* \mu_{q,g}$ where $(-1)^* \mu_{q,g}$ is the discrete image signed measure defined by 
\[(-1)^* \mu_{q,g} =\frac{1}{\# \M_g(\F_q)} \sum_{C\in \M_g'(\F_q)} \frac{\delta_{-\tau(C)}}{\#\Aut_{\F_q}(C)}.\]
We get the following consequence of Theorem~\ref{theorem:H0H1}.

\begin{proposition} \label{prop:vweak}
The sequence of signed measures $(\nu_{q,g})$ weakly converges to the $0$ measure.
\end{proposition}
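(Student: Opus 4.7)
The plan is to reduce the statement to the Katz--Sarnak weak convergence $\mu_{q,g} \to \mu_g$ (equation \eqref{eq:eqweak}), combined with the symmetry of the limiting density $\Fd_g$. For any continuous test function $f \colon I_g \to \R$, a change of variables $\tau \mapsto -\tau$ in the pushforward measure $(-1)^*\mu_{q,g}$ gives
\[
\int_{I_g} f \, d\nu_{q,g} = \int_{I_g} \bigl(f(\tau) - f(-\tau)\bigr) \, d\mu_{q,g}(\tau).
\]
Setting $h(\tau) \colonequals f(\tau) - f(-\tau)$, the function $h$ is continuous and odd on the symmetric interval $I_g = [-2g, 2g]$.

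I would then apply the Katz--Sarnak weak convergence \eqref{eq:eqweak} to the continuous function $h$ to obtain
\[
\lim_{q \to \infty} \int_{I_g} h \, d\mu_{q,g} = \int_{I_g} h(\tau) \Fd_g(\tau) \, d\tau.
\]
The main step is then to show that the right-hand side vanishes, which reduces to verifying that $\Fd_g$ is an even function: if so, the integrand $h(\tau)\Fd_g(\tau)$ is odd on the symmetric interval $I_g$, and the integral is zero. The evenness of $\Fd_g$ can be checked directly from the definition $\Fd_g(\tau) = \int_{A_\tau} dm_g$: the involution $\sigma \colon (\theta_1, \ldots, \theta_g) \mapsto (\pi-\theta_1, \ldots, \pi-\theta_g)$ restricts to a bijection $A_\tau \to A_{-\tau}$ that preserves $dm_g$, since $\sin^2(\pi-\theta) = \sin^2\theta$ and the factors $(2\cos\theta_i - 2\cos\theta_j)^2$ are invariant under $\theta_j \mapsto \pi - \theta_j$. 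Equivalently, the evenness of $\Fd_g$ is recorded in the remark after Theorem~\ref{theorem:H0H1} that $\fraka_n(\M_g) = 0$ for odd $n$, via a density-determined-by-moments argument on a compact interval.

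The conclusion $\int f \, d\nu_{q,g} \to 0$ for every continuous $f$ is precisely the weak convergence of $(\nu_{q,g})$ to the zero signed measure. I do not anticipate a serious obstacle. The subtlest point is methodological: one should argue via continuous test functions rather than going through the moments of the signed measure $\nu_{q,g}$, because, as warned via Example~\ref{ex:moments-signed}, moment convergence does not in general imply weak convergence for signed measures. The present argument sidesteps that issue by piggybacking on the already-established weak convergence of the genuine probability measures $\mu_{q,g}$ and $(-1)^*\mu_{q,g}$, whose common limit is $\mu_g$ by the symmetry of $\Fd_g$.
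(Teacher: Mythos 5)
Your argument is correct, but it follows a genuinely different route from the paper's. You exploit the symmetry directly: writing $\int_{I_g} f \, d\nu_{q,g} = \int_{I_g} \bigl(f(\tau)-f(-\tau)\bigr)\, d\mu_{q,g}(\tau)$ and applying the Katz--Sarnak weak convergence \eqref{eq:eqweak} to the odd continuous function $h(\tau)=f(\tau)-f(-\tau)$, you reduce everything to the evenness of $\Fd_g$, which your involution $(\theta_1,\ldots,\theta_g)\mapsto(\pi-\theta_1,\ldots,\pi-\theta_g)$ verifies correctly (it maps $A_\tau$ to $A_{-\tau}$ and preserves $dm_g$). In effect you are observing that $\mu_{q,g}$ and $(-1)^*\mu_{q,g}$ converge weakly to the same limit $\mu_g$, so their difference tends weakly to zero; this needs nothing beyond \eqref{eq:eqweak} and in particular avoids Theorem~\ref{theorem:H0H1} entirely. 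The paper instead argues through moments: it computes that the odd moments of $\sqrt{q}\,\nu_{q,g}$ equal $-2\bb_n(\M_g)+O(q^{-1/2})$, so all moments of $\nu_{q,g}$ tend to zero, and then upgrades moment convergence to weak convergence via Stone--Weierstrass together with the uniform bound $\|\nu_{q,g}\|\le 2$ on the total variation --- exactly the extra ingredient needed because, as Example~\ref{ex:moments-signed} shows (and as you rightly note), moment convergence alone does not suffice for signed measures. Your proof is shorter and more elementary for the proposition as stated; the paper's detour through moments is not wasted, however, since the moment computation is the real content reused immediately afterwards in Corollary~\ref{cor:momentsnu} (the limits $-2\bb_n(\M_g)$ of the moments of $\sqrt{q}\,\nu_{q,g}$) and in motivating Conjecture~\ref{conj:diff}, information your symmetry argument does not produce.
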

\begin{proof}
By definition, the even moments of $\nu_{q,g}$ are zero. By Theorem~\ref{theorem:H0H1} the odd moments of $\sqrt{q}\, \nu_{q,g}$ are equal to 
\[
2 \frac{S_n(q,\M_g)}{q^{3g-3+(n-1)/2}} 
= -2 \bb_n(\M_g) + O\left(\frac{1}{\sqrt{q}}\right).
\]
Hence all moments of $\nu_{q,g}$ are $0$. 
Now if $f$ is any continuous function on the compact interval $I_g=[-2g,2g]$, then by the Stone--Weierstrass theorem, for every $\eps>0$  we can find a polynomial $P$ such that $\lvert f(\tau)-P(\tau)\rvert\leq \eps$ for all $\tau\in I_g$. Therefore we have
\[
\biggl\lvert \int_{I_g} f \,d\nu_{q,g} \biggr\rvert \leq \biggl\lvert\int_{I_g} (f-P) \,d\nu_{q,g} + \int_{I_g} P \,d\nu_{q,g}\biggr\rvert \leq \eps \|\nu_{q,g}\| + \biggl\lvert \int_{I_g} P \,d\nu_{q,g}\biggr\rvert.\]
The last term is a sum of moments which converges to $0$ when $q$ goes to infinity. The variation of $\nu_{g,q}$ is also uniformly bounded since
\[\|\nu_{q,g}\| = \lvert\nu_{q,g}\rvert(I_g)=\sum_{\tau} \,\Bigl\lvert\Nq_{q,g}(\tau)-\Nq_{q,g}(-\tau)\Bigr\rvert \leq 2 \sum_{\tau} \Nq_{q,g}(\tau) = 2 \mu_{q,g}(I_g)=2.\]
\end{proof}

Having a $0$ measure is not very interesting and the proof of Proposition~\ref{prop:vweak} shows that it would be much more interesting to study the weak convergence of the sequence of signed measures $(\sqrt{q} \,\nu_{q,g})$. We have from the previous proof the following corollary.
\begin{corollary} \label{cor:momentsnu}
The even moments of $\sqrt{q} \,\nu_{q,g}$ are zero and the odd $n$th moments of the sequence $(\sqrt{q}\, \nu_{q,g})$ converge to $-2 \bb_n(\M_g)$. 
\end{corollary}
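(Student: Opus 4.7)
The plan is to extract the statement directly from the moment computation already carried out inside the proof of Proposition~\ref{prop:vweak}. First, using the definition $\nu_{q,g}=\mu_{q,g}-(-1)^*\mu_{q,g}$ together with the change of variable $\tau\mapsto -\tau$ in the second image measure, I would write
\[
\int_{I_g}\tau^n\,d\nu_{q,g} \;=\; \bigl(1-(-1)^n\bigr)\int_{I_g}\tau^n\,d\mu_{q,g},
\]
which vanishes whenever $n$ is even; scaling by $\sqrt{q}$ is harmless, so this gives the first half of the statement.

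For odd $n$, the same identity reduces the task to finding the limit of $2\sqrt{q}\int_{I_g}\tau^n\,d\mu_{q,g}$. Unpacking the definition of $\mu_{q,g}$ exactly as at the start of the proof of Theorem~\ref{th:betterKS}, this quantity equals $2\sqrt{q}\,S_n(q,\M_g)/(\#\M_g(\F_q)\cdot q^{n/2})$. The Lefschetz/Deligne argument recalled there gives $\#\M_g(\F_q)=q^{3g-3}+O(q^{3g-4})$ (since $\M_g$ is a smooth irreducible Deligne--Mumford stack of dimension $3g-3$), so
\[
2\sqrt{q}\int_{I_g}\tau^n\,d\mu_{q,g} \;=\; \frac{2\,S_n(q,\M_g)}{q^{3g-3+(n-1)/2}}\bigl(1+O(q^{-1})\bigr).
\]
Because $\fraka_n(\M_g)=0$ for odd $n$, Theorem~\ref{theorem:H0H1}\,(\ref{item:4}) specializes to
\[
\frac{S_n(q,\M_g)}{q^{3g-3+n/2}} \;=\; -\frac{\bb_n(\M_g)}{\sqrt{q}}+O(q^{-1}),
\]
and substituting identifies the $n$th moment of $\sqrt{q}\,\nu_{q,g}$ as $-2\bb_n(\M_g)+O(q^{-1/2})$, whose limit is $-2\bb_n(\M_g)$.

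There is no real obstacle to overcome here: the entire calculation is essentially the displayed chain of equalities in the proof of Proposition~\ref{prop:vweak}, and the corollary just isolates that intermediate bound and reads off the resulting statement about convergence of moments. This is presumably why the authors present it as a corollary of the preceding proof rather than as an independent result; the only care needed is to keep track of the factor $q^{3g-3}/\#\M_g(\F_q)=1+O(q^{-1})$, which is absorbed into the error term and therefore does not affect the limit.
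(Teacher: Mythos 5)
Your proposal is correct and follows essentially the same route as the paper: the authors also read the corollary directly off the moment computation inside the proof of Proposition~\ref{prop:vweak}, namely the symmetry argument for even moments and the identification, via Theorem~\ref{theorem:H0H1}\,(\ref{item:4}) with $\fraka_n(\M_g)=0$ for odd $n$, of the odd moments as $-2\bb_n(\M_g)+O(q^{-1/2})$. Your only (harmless) difference is that you make explicit the normalization factor $q^{3g-3}/\#\M_g(\F_q)=1+O(q^{-1})$, which the paper's displayed chain of equalities absorbs implicitly.
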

Unfortunately we cannot prove weak convergence: the rest of the proof fails as we do not know if one can bound $\sqrt{q}\, \|\nu_{q,g}\|$ uniformly in $q$ (which is a necessary condition for weak convergence). Moreover, one cannot expect a general result from the convergence of moments alone as in the case of (positive) measures as the following counterexample shows.
\begin{example} \label{ex:moments-signed}
Consider the sequence of signed measures $(\mu_i)$ with density $i \sin i x$ on the interval $[0, 2\pi]$. The sequence of $n$th moments converges to $-(2\pi)^n$ which is the $n$th moment of the signed measure $\mu=-\delta_{2 \pi}$. But $\|\mu_i\|=4i$ which is not bounded and therefore the sequence $(\mu_i)$ does not weakly converge (to $\mu$); see for instance \cite[Prop.~1.4.7]{bogachev}.    
\end{example}

Recall from \eqref{eq:Hd} that the $n$th moment of the function
\[
\Hd_g(\tau)=\int_{A_{\tau}}  \Bigl(\frac{1}{6}w_1^3-\frac{1}{2}w_1w_2+\frac{1}{3}w_3-w_1 \Bigr)\,  d m_g, 
\]
with $A_{\tau}=\{(\theta_1,\dots,\theta_g)\in[0,\pi]^g:\,\sum_{j}2\cos\theta_j= \tau\}$, is 
equal to $\bb_n(\M_g)$.
Because of the convergence of the moments above, we conjecture the following.
\begin{conjecture} \label{conj:diff} 
For $g \geq 3$, the sequence of signed measures $(\sqrt{q} \,\nu_{q,g})$ weakly converges to the continuous signed measure with density $-2 \Hd_g$. 
\end{conjecture}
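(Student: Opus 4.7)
The plan is to reduce the weak convergence of $(\sqrt{q}\,\nu_{q,g})$ to a single missing input, namely a uniform bound
\[
\sqrt{q}\,\|\nu_{q,g}\| \leq K_g
\]
for some constant $K_g$ independent of $q$. Granted such a bound, weak convergence follows by a Stone--Weierstrass argument essentially identical to the one in the proof of Proposition~\ref{prop:vweak}, applied this time to the rescaled sequence and using Corollary~\ref{cor:momentsnu} as the moment input. Explicitly, for continuous $f\colon I_g\to\R$ and $\eps>0$, choose a polynomial $P$ with $\|f-P\|_\infty<\eps$; then
\[
\biggl\lvert \int_{I_g} f \, d(\sqrt{q}\,\nu_{q,g}) + 2\int_{I_g} f(\tau)\,\Hd_g(\tau) \, d\tau \biggr\rvert \leq \eps\bigl(K_g + 2\|\Hd_g\|_1\bigr) + \biggl\lvert \int_{I_g} P \, d(\sqrt{q}\,\nu_{q,g}) + 2\int_{I_g} P(\tau)\,\Hd_g(\tau) \, d\tau \biggr\rvert,
\]
and the final absolute value is a finite linear combination of moment differences, each tending to $0$ by Corollary~\ref{cor:momentsnu}.

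The only real task is therefore the total-variation bound. Since normalized traces live in $(1/\sqrt{q})\Z$, one may write
\[
\sqrt{q}\,\|\nu_{q,g}\| = \frac{\sqrt{q}}{\#\M_g(\F_q)} \sum_{\substack{t\in\Z\\ \lvert t\rvert \leq 2g\sqrt{q}}} \bigl\lvert N^+_{q,g}(t) - N^+_{q,g}(-t) \bigr\rvert,
\]
where $N^+_{q,g}(t)$ is the weighted count of $\F_q$-isomorphism classes of genus-$g$ curves with Frobenius trace $t$. Since the range of summation contains $O(\sqrt{q})$ integers, Cauchy--Schwarz reduces the problem to an $L^2$ estimate
\[
\sum_{t\in\Z} \bigl( N^+_{q,g}(t) - N^+_{q,g}(-t)\bigr)^2 = O\bigl(q^{6g-15/2}\bigr),
\]
which is precisely the scale suggested by the conjectural asymptotic $N^+_{q,g}(t) - N^+_{q,g}(-t) \approx -2\Hd_g(t/\sqrt{q})\, q^{3g-4}$. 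I would attack this cohomologically: writing $\mathbf{1}_{\Tr=t}$ as an exponential sum transforms the left-hand side into a sum of traces of Frobenius on the cohomology of tensor powers of the universal local system on $\M_g\times\M_g$, the antisymmetrization in $t$ isolates the contribution of local systems $\VV_\lambda$ with $\lvert\lambda\rvert$ odd, and Deligne's weight theorem then controls each individual piece.

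The heart of the difficulty --- and the main obstacle to implementing this plan --- is that, in contrast to Theorem~\ref{theorem:H0H1}(\ref{item:2}), where a single local system $\VV_{(1,1,1)}$ governs the leading correction $\bb_n(\M_g)$, the $L^2$ estimate above is sensitive to the sub-top-weight pieces of $H^j_c(\M_g\otimes\Fbar_q,\VV_\lambda)$ \emph{simultaneously} across every $\lambda$ with $\lvert\lambda\rvert$ odd and every $j$ below the top degree. A uniform bound on the total dimension of these pieces, of the strength required for the $L^2$ estimate, does not appear to be available in the literature; indeed, such a bound would very likely also imply Conjecture~\ref{conj:ponctual-conv} for $\Mnh_g$, and therefore seems at least as hard as the present conjecture. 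A tempting alternative route is a direct pointwise bound $\lvert N^+_{q,g}(t) - N^+_{q,g}(-t)\rvert = O(q^{3g-7/2})$, which would make the total-variation bound immediate; but this too is a refined version of Conjecture~\ref{conj:ponctual-conv} and so is no easier. Any substantive progress will therefore require either new uniform bounds on the sub-top-weight cohomology of symplectic local systems on $\M_g$, or a mechanism for producing the cancellation implicit in $N^+_{q,g}(t) - N^+_{q,g}(-t)$ without going through pointwise or $L^2$ estimates at all.
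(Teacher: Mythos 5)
What you have written is not a proof, and it could not have been checked against one: the statement is Conjecture~\ref{conj:diff}, which the paper leaves open. The paper's own discussion around it consists exactly of the two ingredients you reproduce, namely the moment convergence of Corollary~\ref{cor:momentsnu} and the observation (via Example~\ref{ex:moments-signed} and the remark after Corollary~\ref{cor:momentsnu}) that moment convergence of signed measures does not imply weak convergence unless one also has a uniform bound on $\sqrt{q}\,\|\nu_{q,g}\|$, which the authors state they cannot prove. So your Stone--Weierstrass reduction is correct and coincides with the paper's own reasoning (it is the rescaled version of the argument in Proposition~\ref{prop:vweak}), but it does not advance beyond it: the uniform total-variation bound is precisely the open heart of the conjecture, and your cohomological strategy for the $L^2$ estimate is, as you yourself concede, not carried out and rests on inputs (uniform control of sub-top-weight cohomology of all $\VV_\lambda$ with $\lvert\lambda\rvert$ odd) that are not available. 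The genuine gap is therefore the entire substance of the statement; your honest framing of it as a reduction is appropriate, but it should not be presented as a path that ``only'' lacks one lemma, since that lemma is at least as strong as the conjecture itself.

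Two smaller points. First, your numerology for the $L^2$ route is consistent ($\#\M_g(\F_q)\sim q^{3g-3}$, about $4g\sqrt{q}$ traces, Cauchy--Schwarz forcing $\sum_t (N^+_{q,g}(t)-N^+_{q,g}(-t))^2=O(q^{6g-15/2})$), but the pointwise bound you claim would suffice is off by $\sqrt{q}$: with $\lvert N^+_{q,g}(t)-N^+_{q,g}(-t)\rvert=O(q^{3g-7/2})$ one only gets $\sqrt{q}\,\|\nu_{q,g}\|=O(\sqrt{q})$; the bound needed is $O(q^{3g-4})$, matching the conjectural asymptotic you quote. Second, your reduction implicitly uses that the even moments of $\Hd_g$ vanish (so that the moments of $-2\Hd_g$ really are the limits in Corollary~\ref{cor:momentsnu}); this is true because $\Hd_g$ is odd, but it deserves a sentence.
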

Such a result would for instance imply that $\sqrt{q}\, \|\nu_{q,g}\|$ is uniformly bounded, hence there exists a constant $C>0$  such that for all $q$ and all $\tau=t/\sqrt{q}$,  we have $\lvert\Nq_{q,g}(\tau)-\Nq_{q,g}(-\tau)\rvert \leq C/\sqrt{q}$. 
   
In genus $3$, in the same spirit as in Section~\ref{sec:experiment}, one can run experiments which illustrate how the values 
\[
\left\{q \, \left(\Nq_{q,g}\left(\frac{t}{\sqrt{q}}\right)-\Nq_{q,g}\left(\frac{-t}{\sqrt{q}}\right)\right)\right\}_{0 \leq t \leq g \lfloor 2 \sqrt{q}\rfloor}
\]  
are close to the values $-2\Hd_3(t/\sqrt{q})$. See for instance Fig.~\ref{fig:comp} for $q=53$. Seeing the data, one may even wonder if something stronger would hold in the same line as Conjecture~\ref{conj:ponctual-conv}, at least for $g=3$.\\

Under this conjecture, one can use the moments of the density function $\Hd_3$ to revisit the result of 
\cite{RRRSS}.
Based on results of \cite{bucur}, the authors gave a heuristic explanation for the distribution of the points 
\[
p_{t,q}=\left(\frac{t}{\sqrt{q}},q \, \left(\Nq_{q,g}\left(\frac{t}{\sqrt{q}}\right)-\Nq_{q,g}\left(\frac{-t}{\sqrt{q}}\right)\right)\right)
\]
when $0 \leq t \leq g \lfloor 2\sqrt{q}\rfloor$ by comparing it with the distribution of differences around the mean in the binomial law \cite[Cor.~2.3]{RRRSS}. With the arguments given there, the distribution is approximated by the function 
\[
\calV^{\lim}(\tau) =  \tau (1-\tau^2/3) \cdot \left(\frac{1}{\sqrt{2\pi}} e^{-\tau^2 / 2}\right).
\]
Graphically for $q=53$, the comparison looks acceptable but not perfect (see Fig.~\ref{fig:comp}). This is fair as the heuristic grew from a result true when the degree of the plane curves in play is larger than $2q-1$. As presently we are dealing with non-hyperelliptic curves of genus~$3$, represented as  plane curves of degree~$4$, the condition is obviously never fulfilled. It is therefore already stunning that a close, albeit imperfect, match was found in this way.

We now take a different road based on Conjecture~\ref{conj:diff} and approximate the density $-2\Hd_3$ by a function $\nu^{\lim}$ using the moments $\bb_n(\M_3)$. 
By Theorem~\ref{theorem:H0H1}, they can be efficiently computed using any symmetric polynomial package. We used Maple and the package SF~\cite{SF} to compute $\bb_{n}(\M_3)$ for $n=1,3,5,\ldots,25$, and found the following values:

\bigskip
\begin{center}
\begin{tabular}{crrcrrcr}
\toprule
$n$ & $\bb_{n}(\M_3)$ &\hbox to 2em{}& $n$ & $\bb_{n}(\M_3)$ &\hbox to 2em{}& $n$ & $\bb_{n}(\M_3)$ \\
\cmidrule{1-2}\cmidrule{4-5}\cmidrule{7-8}
1 &    0 && 11 &        10395 && 19 &          4818{\,}35250\\
3 &    1 && 13 &   1{\,}35564 && 21 &         83083{\,}61040\\
5 &    9 && 15 &  19{\,}27926 && 23 &   15{\,}03096{\,}79212\\
7 &   84 && 17 & 295{\,}24716 && 25 &  283{\,}65681{\,}18720\\
9 &  882 &&\\
\bottomrule
\end{tabular}
\end{center}
\bigskip

 Taking $\nu^{\lim}(\tau)$ of the form $P(\tau)  \left(\frac{1}{\sqrt{2 \pi}} e^{-\tau^2/2}\right)$ with $P$ an odd polynomial of degree~$5$, we want 
 \[
 \int_\R  \tau^{2n+1} \cdot \nu^{\lim}(\tau) \,d\tau = -2 \bb_{2 n+1}(\M_3),
 \] 
for $n=0,1$ and $2$, and one finds that
\[
\nu^{\lim}(\tau)= \left(1/60\, \tau^5-1/2\, \tau^3+5/4\, \tau\right) \left(\frac{1}{\sqrt{2 \pi}} e^{-\tau^2/2}\right).
\]
Remarkably, the moments of $\nu^{\lim}(\tau)$ still agree with $-2 \bb_{2n+1}(\M_3)$ for $n=3,4$ and $5$. However, for $n=6$ we find that
$\int_\R \tau^{13} \cdot  \nu^{\lim}(\tau)\, d\tau = -2 \cdot 135135 \ne -2 \cdot \bb_{13}(\M_3)$. 

\input multigraph-comp.tex

In Figure~\ref{fig:comp} we see a comparison between the graph of points $\{p_{t,53}\}_{0 \leq t \leq 42}$ and the functions $\calV^{\lim}(\tau)$ and $\nu^{\lim}(\tau)$, in favor of the latter.

\nocite{Leontev2006russian}
\bibliographystyle{alphaurl}
\bibliography{shorterbib}

\end{document}